\definecolor{olive}{rgb}{0.6, 0.6, 0.2}
\definecolor{sand}{rgb}{0.8666666666666667, 0.8, 0.4666666666666667}
\definecolor{wine}{rgb}{0.5333333333333333, 0.13333333333333333, 0.3333333333333333}
\definecolor{icmlblue}{rgb}{0.21,0.49,0.74}
\definecolor{electricindigo}{rgb}{0.44, 0.0, 1.0}
\NewDocumentCommand{\aspectratio}{smo}
 {
  \hbox_set:Nn \l_tmpa_box {\includegraphics{#2}}
  \IfNoValueTF{#3}
   {
    \__student_aspectratio:nn { \box_wd:N \l_tmpa_box } { \box_ht:N \l_tmpa_box }
   }
   {
    \IfBooleanTF{#1}{ \tl_gset:Nx } { \tl_set:Nx } #3
     {
      \__student_aspectratio:nn { \box_wd:N \l_tmpa_box } { \box_ht:N \l_tmpa_box }
     }
   }
 }
\newcommand{\neworrenewcommand}[1]{\providecommand{#1}{}\renewcommand{#1}}
\newcommand{\zoomincludgraphic}[9]{
    \neworrenewcommand{\ffoo}[5]{
\begin{tikzpicture}[x=#1, y=#1, font=\footnotesize]
\aspectratio{#2}[\imsizeratio] 

  \node[anchor = south east, inner sep=0] (image) at (1,0) {\includegraphics[width=#1]{#2}};
	    \coordinate (viewport lower left) at (#3,#4/\imsizeratio);  
	    \coordinate(viewport upper right) at (#5,#6/\imsizeratio);  
        \draw[##5, line width = ##4 pt] (viewport lower left) rectangle (viewport upper right);
 
     \pgfmathsetmacro{\multone}{#5-#3}
     \pgfmathsetmacro{\multtwo}{#6/\imsizeratio-#4/\imsizeratio}
     
     \ifthenelse{\equal{#9}{bottom_left} }{ 
	      \node[anchor=north, draw= ##3, inner sep=0pt, line width = ##2 pt,outer sep=0pt] (zoomPart) at (\multone*#7/2+##2/345*1.333, \multtwo*#7+##2/345*1.333) {
	       \scalebox{#7}{\tikz{
	         \clip (#3,#4/\imsizeratio) rectangle (#5,#6/\imsizeratio);
	           
	         \node[anchor=south east, inner sep=0] at (1,0) {\includegraphics[width=#1]{#2}}; 
	         }}};
	   \ifthenelse{\equal{##1}{line_connection_on} }{ 
		  \draw[red, dashed] (viewport upper right|-viewport lower left) -- (zoomPart.north east); 
		  \draw[red, dashed] (viewport lower left) -- (zoomPart.north west);
		   }{}
	       
	 }{}

     \ifthenelse{\equal{#9}{bottom_right} }{ 
	      \node[anchor=north, draw= ##3, inner sep=0pt, line width = ##2 pt,outer sep=0pt] (zoomPart) at (1-\multone*#7/2-##2/345*1.333, \multtwo*#7 + ##2/345*1.333) {
	       \scalebox{#7}{\tikz{
			 \clip (#3,#4/\imsizeratio) rectangle (#5,#6/\imsizeratio);
	         \node[anchor=south east, inner sep=0] at (1,0) {\includegraphics[width=#1]{#2}}; 
	         }}%
	       };
		\ifthenelse{\equal{##1}{line_connection_on} }{ 
			  \draw[red, dashed] (viewport upper right|-viewport lower left) -- (zoomPart.south west); 
			  \draw[red, dashed] (viewport upper right) -- (zoomPart.north west);
		   }{}
     }{}  
       
    \ifthenelse{\equal{#9}{up_right} }{  
	      \node[anchor=north, draw= ##3, inner sep=0pt, line width = ##2pt, outer sep=0pt] (zoomPart) at (1-\multone*#7/2-##2/345*1.333,1/\imsizeratio-##2/345*1.333) {
	       \scalebox{#7}{\tikz{
	          \clip (#3,#4/\imsizeratio) rectangle (#5,#6/\imsizeratio);
	          \node[anchor=south east, inner sep=0] at (1,0) {\includegraphics[width=#1]{#2}}; 
	         }}%
	         
	       };
	   \ifthenelse{\equal{##1}{line_connection_on} }{ 
		  \draw[red, dashed] (viewport lower left|-viewport upper right) -- (zoomPart.south west);
		  \draw[red, dashed] (viewport upper right) -- (zoomPart.south east);
		   }{}
     }{}

     \ifthenelse{\equal{#9}{up_left} }{ 
	      \node[anchor=north, draw= ##3, inner sep=0pt, line width = ##2pt,outer sep=0pt] (zoomPart) at (\multone*#7/2+##2/345*1.333, 1/\imsizeratio-##2/345*1.333) {
	       \scalebox{#7}{\tikz{
	         \clip (#3,#4/\imsizeratio) rectangle (#5,#6/\imsizeratio);
	           
	          \node[anchor=south east, inner sep=0] at (1,0) {\includegraphics[width=#1]{#2}}; 
	         }}};
		   \ifthenelse{\equal{##1}{line_connection_on} }{ 
			  \draw[red, dashed] (viewport lower left|-viewport upper right) -- (zoomPart.south west);
			  \draw[red, dashed] (viewport upper right) -- (zoomPart.south east);
			   }{}
	     }{}

  	\ifthenelse{\equal{#8}{help_grid_on} }{ 
           \begin{scope}[
                x={(image.south east)},
                y={(image.north west)},
                font=\footnotesize,
                help lines,
                overlay
            ]
            
            \draw[help lines, xstep=.1,ystep=.1,overlay] (0,0) grid (1,1);
            \foreach \x in {0,1,...,9} { 
                \node[anchor=north] at (\x/10,0) {0.\x}; 
            }
            \foreach \y in {0,1,...,9} {
                \node[anchor=east] at (0,\y/10) {0.\y};
            }
        \end{scope}    
	}{}  
   
\end{tikzpicture}

    }
    \ffoo
}
\definecolor{deblue}{RGB}{11,132,147}
\definecolor{ocra}{RGB}{204, 119, 34}
\setlist[enumerate]{leftmargin=.5in}
\setlist[itemize]{leftmargin=.5in}
\crefname{hypothesis}{Hypothesis}{Hypotheses}
\title{Deep Block Proximal Linearised Minimisation Algorithm for Non-convex Inverse Problems\thanks{Submitted to the editors DATE.
\funding{ZW acknowledges support from the National Natural Science Foundation of China grant 12001286 and the China Postdoctoral Science Foundation grants 2022M711672. YC acknowledges funding from the Cambridge Centre for Data-Driven Discovery and Accelerate Programme for Scientific Discovery, made possible by a donation from Schmidt Futures.
TZ acknowledges support from the NSFC/RGC N\_CUHK 415/19, ITF ITS/173/22FP, RGC 14300219, 14302920, 14301121, and CUHK Direct Grant for Research. CBS acknowledges support from the Philip Leverhulme Prize, the Royal Society Wolfson Fellowship, the EPSRC advanced career fellowship EP/V029428/1, EPSRC grants EP/S026045/1 and EP/T003553/1, EP/N014588/1, EP/T017961/1, the Wellcome Innovator Awards 215733/Z/19/Z and 221633/Z/20/Z, CCMI and the Alan Turing Institute. AAR gratefully acknowledges funding from the Cambridge Centre for Data-Driven Discovery and Accelerate Programme for Scientific Discovery, made possible by a donation from Schmidt Futures, ESPRC Digital Core Capability Award, and CMIH and CCIMI, University of Cambridge.}}}
\author{Chaoyan Huang\thanks{Department of Mathematics, The Chinese University of Hong Kong, Hong Kong  
  (\email{cyhuang@math.cuhk.edu.hk}).}
    \and Zhongming Wu\thanks{School of Management Science and Engineering,
   Nanjing University of Information Science and Technology,
   Nanjing, China 
  (\email{wuzm@nuist.edu.cn}).}
    \and Yanqi Cheng\thanks{Department of Applied Mathematics and Theoretical Physics, 
   University of Cambridge 
   (\email{yc443@cam.ac.uk}).}
    \and Tieyong Zeng\thanks{Department of Mathematics, The Chinese University of Hong Kong, Hong Kong  (\email{zeng@math.cuhk.edu.hk}).}
    \and \qquad\qquad\qquad Carola-Bibiane Sch\"onlieb\thanks{Department of Applied Mathematics and Theoretical Physics, 
   University of Cambridge 
   (\email{cbs31@cam.ac.uk}).}
    \and Angelica I. Aviles-Rivero\thanks{Department of Applied Mathematics and Theoretical Physics, 
   University of Cambridge 
   (\email{ai323@cam.ac.uk}).}
}
\begin{document}

\maketitle

\begin{abstract}
Image restoration is typically addressed through non-convex inverse problems, which are often solved using first-order block-wise splitting methods. In this paper, we consider a general type of non-convex optimisation model that captures many inverse image problems and present an inertial block proximal linearised minimisation (iBPLM) algorithm. Our new method unifies the Jacobi-type parallel and the Gauss-Seidel-type alternating update rules, and extends beyond these approaches. The inertial technique is also incorporated into each block-wise subproblem update, which can accelerate numerical convergence. Furthermore, we extend this framework with a plug-and-play variant (PnP-iBPLM) that integrates deep gradient denoisers, offering a flexible and robust solution for complex imaging tasks. We provide comprehensive theoretical analysis, demonstrating both subsequential and global convergence of the proposed algorithms. To validate our methods, we apply them to multi-block dictionary learning problems in image denoising and deblurring. Experimental results show that both iBPLM and PnP-iBPLM significantly enhance numerical performance and robustness in these applications.
\end{abstract}

\begin{keywords}
Plug-and-play, first-order method, convergence guarantee, dictionary learning, image restoration
\end{keywords}

\begin{MSCcodes}
62H35, 90C26, 46N10, 94A08
\end{MSCcodes}

\section{Introduction}

Image processing is crucial in solving a broad spectrum of inverse problems. 
Over the past decades, a large body of literature has been proposed to address imaging challenges. These range from classic models, such as the Rudin-Osher-Fatemi (ROF) model \cite{rudin1992nonlinear}, to modern approaches like dictionary learning~\cite{aharon2006k} and tight frame-based methods~\cite{cai2008framelet}. Among these, the dictionary learning paradigm stands out for its ability to approximate clean images ($I$) using dictionaries ($D$) and corresponding sparse coefficient vectors ($X$), effectively framed as  $DX = I$.  However, relying on the $\ell_0$ norm to maintain sparsity in $X$ makes this optimisation problem non-convex and involves multiple variables.

Different from other models, the dictionary learning approach aims to approximate the clean image $I$ using the dictionary $D$ and the corresponding sparse coefficient vector $X$. This leads to the following image denoising model:

\begin{equation}\label{dic}
    \min_{D,X}\frac{1}{2}\Vert DX-Y\Vert^2 + \lambda_D\phi_D(D)+\lambda_X\phi_X(X),
\end{equation}
where $Y$ is the input image, $\lambda_D$ and $\lambda_X$ are positive parameters, $\phi_D$ and $\phi_X$ are regularisers of $D$ and $X$, respectively. 
The optimal $D$ and $X$ are typically obtained via alternating-based \cite{elad2006image} or block-based \cite{xu2016fast} algorithms. The clean image is then reconstructed by $DX = I$. However, this model does not adapt well to other image restoration tasks. To address this, the Bayesian Maximum A Posteriori theory is incorporated to establish a more general model: 
\begin{equation}\label{map}
    \min_{D,X,I}\frac{1}{2}\Vert BI-Y\Vert^2+\frac{\eta}{2}\Vert DX-I\Vert^2 + \lambda_D\phi_D(D)+\lambda_X\phi_X(X),
\end{equation}
where $\eta$ is a positive parameter and $B$ is a linear operator for different image restoration tasks. 
For instance, when  $B$ is the identity operator, \eqref{map} functions as a denoising model; conversely, when $B$  is a sampling operator, the model serves as a super-resolution model.
Due to the sparsity of the dictionary learning model, detailed information can often   mistakenly treated as noise. To preserve edge details,  the works of that~\cite{huang2021quaternion} and~\cite{wu2022total} introduced the total variation prior into model \eqref{map}. Furthermore, various plug-and-play learning priors are utilised to replace the traditional knowledge-based regularisation, introducing inexplicitness and non-convexity into the model. Consequently, existing methods may not efficiently solve these complex models.

The aforementioned non-convex dictionary learning model motivates us to consider the following type of structural non-convex optimisation problem: 
\begin{equation}\label{objective}
\min~\left\{F\left(x_1, \ldots, x_p\right):=\sum_{i=1}^p \theta_i\left(x_i\right)+h\left(x_1, \ldots, x_p\right)~\Big|~ x_i \in \mathbb{R}^{n_i}\right\},
\end{equation}
where $h$ is a block-coordinate-wise Lipschitz smooth function and $\theta_i, i=1, \ldots, p$, are proper closed (possibly non-convex) functions. 
The Jacobi-type and the Gauss-Seidel-type block coordinate descent (BCD) methods are two categories of solving methods for \eqref{objective}.  Both methods deal with each block-variable $x_i$ individually in \eqref{objective}.
The Jacobi-type methods \cite{gan2024block,razaviyayn2014parallel,xu2017globally} update the variables in parallel, while the Gauss-Seidel-type algorithms \cite{attouch2010proximal,bolte2014proximal,phan2023inertial,le2020inertial} update them alternately, one by one. However, their effectiveness depends on the complexity of solving the block-wise subproblems.

Block-wise subproblems typically involve evaluating the proximal operator~\cite{parikh2014proximal} of certain functions, which may include total variational (TV) or learning-based regularisation in inverse image problems. However, most of these proximal subproblems lack a closed-form solution, particularly in cases of implicit learning regularisation. Fortunately, the plug-and-play~\cite{venkatakrishnan2013plug} (PnP) method can effectively address this challenge by alternately solving the subproblems using a trained deep neural network.

\textbf{Contributions.} To our knowledge, no existing work integrates the Gauss-Seidel-type BCD algorithms with the PnP approach. This paper aims to fill this gap. Additionally, this paper seeks to unify the Jacobi-type and the Gauss-Seidel-type BCD methods within a single algorithmic framework. Furthermore, inertial acceleration is introduced to enhance convergence speed, and the theoretical convergence of the unified algorithm, with or without the PnP prior, is examined. We highlight:
\begin{itemize}
    \item  We introduce a novel general inertial block proximal linearised minimisation (iBPLM) algorithm framework for solving the non-convex and non-smooth optimisation problem of that~\eqref{objective}. Our new method synthesises the Jacobi-type parallel and the Gauss-Seidel-type alternating update rules, thereby enhancing performance and properties.
\item Secondly, inspired by the principles of the Plug-and-Play (PnP) method, we redefine the iBPLM algorithm as the PnP-iBPLM. This version integrates gradient step-based deep priors. It significantly boosts the algorithm's ability to tackle more intricate and challenging optimisation problems. As a result, it sets a new standard in algorithm performance and adaptability.

\item By employing the Kurdyka–Łojasiewicz property, we have rigorously proved the subsequential and global convergence of both the iBPLM and PnP-iBPLM algorithms. This provides a solid theoretical underpinning, ensuring consistent performance across a range of complex optimisation scenarios. 

\item We extensively validate the theory with a range of numerical and visual results for image denoising and deblurring tasks. We demonstrate that our framework leads to better approximations than existing techniques.
\end{itemize} 

The remainder of this work is organized as follows. In section \ref{relatedwork}, we review some related works including the block coordinate descent optimisation algorithms, inertial acceleration, plug-and-play methods, and basic knowledge of dictionary learning-based methods in image restoration. After that, the proposed algorithms with and without deep denoiser are theoretically analyzed in section \ref{proposed}. In section \ref{results}, the imaging models with two blocks and four blocks are tested to illustrate the effectiveness of the proposed schemes. Final remarks are given in section \ref{conclusion}.

\section{Related Work}\label{relatedwork}

\subsection{BCD-based Optimisation Algorithms} 
Block coordinate descent (BCD) methods are commonly used for imaging problems by updating one block of variables at a time while fixing others. 
These methods can be broadly categorised into two types: alternating-based and block-based.
Alternating-based methods, such as the proximal alternating linearised minimisation (PALM) algorithm, update subsets of variables sequentially, effectively merging  the benefits of proximal operators with alternating minimisation. PALM is particularly effective for non-smooth and non-convex problems,  and it has demonstrated global convergence under specified conditions \cite{bolte2014proximal}. 
Recent studies have further validated its effectiveness and convergence properties in various settings \cite{wang2023linear, wang2024stochastic}.
There are three principal types of block-based methods: classical block-based \cite{tseng2001convergence}, proximal block-based \cite{razaviyayn2013unified}, and proximal gradient block-based \cite{bolte2014proximal} methods. The classical block-based method alternates the minimisation of block functions within the objective but may be inadequate for non-convex problems. The proximal block-based method enhances this approach by integrating block functions with a proximal term, thereby ensuring global convergence under specific conditions \cite{attouch2010proximal}. The proximal gradient block-based method, on the other hand, minimises a proximal linearisation of the objective and achieves global convergence when block functions are Lipschitz smooth \cite{bolte2014proximal}. Moreover, when block functions exhibit relative smoothness, further studies have demonstrated global convergence \cite{bauschke2017descent, lu2018relatively, ahookhosh2021multi, hien2021algorithms}.

\subsection{Inertial Acceleration} 
Numerous studies have focused on enhancing the convergence rate of gradient-based first-order methods, as demonstrated in~\cite{nesterov2013gradient,pock2016inertial,wu2024extrapolated}.
A widely adopted strategy involves incorporating an inertial force, often referred to as extrapolation, into the iterative scheme. This approach leverages the outcomes of the previous two iterations to update the next iterate, ultimately resulting in the development of inertial accelerated methods.
Well-known techniques such as the classic heavy-ball method and Nesterov acceleration are exemplary of this type of accelerated method.
In convex settings, extensive research has shown that the convergence rate can be theoretically accelerated by selecting optimal extrapolated schemes~\cite{attouch2016rate,attouch2018fast,beck2009fast}. Over the past decade, inertial acceleration has been adapted for non-convex optimisation~\cite{ahookhosh2021block,pock2016inertial,ochs2014ipiano,ochs2019unifying,phan2023inertial,qu2024partially,wu2019general}, and has been effectively integrated into BCD-based optimisation algorithms~\cite{xu2013block,xu2017globally,razaviyayn2013unified,le2020inertial,phan2023inertial}. The convergence of these inertial methods to a stationary point can be guaranteed under the Kurdyka–{\L}ojasiewicz framework for non-convex problems \cite{attouch2010proximal}. Although the inertial effect may not theoretically accelerate the convergence rate in non-convex settings, numerous practical implementations have demonstrated its effectiveness.

\subsection{Plug-and-Play (PnP) Algorithms} 
PnP methods integrate denoising priors into splitting algorithms, which are widely used for imaging problems~\cite{zhang2017learning}. PnP-ADMM, introduced by Venkatakrishnan et al. \cite{venkatakrishnan2013plug}, replaces the proximal subproblem with a denoising prior, offering a flexible framework for image restoration. Subsequent approaches like PnP-FBS \cite{wu2020deep} and PnP-DRS \cite{buzzard2018plug} have demonstrated empirical success across diverse applications. However, theoretical guarantees are limited, often relying on assumptions such as denoiser averaging or nonexpansiveness~\cite{sun2019online, sun2021scalable}. A significant challenge is ensuring nonexpansiveness in deep denoisers, which is crucial for convergence. Off-the-shelf deep denoisers often lack 1-Lipschitz continuity, which affects performance~\cite{zhang2021plug}. Ryu et al. \cite{ryu2019plug} proposed normalising each layer using its spectral norm, but this limits residual skip connections. The work of that~\cite{hurault2022gradient} addressed this by training a denoiser with a gradient-based PnP prior. Recent advancements have led to exploring convergence guarantees and applying combined PnP methods with extrapolated DYS algorithms \cite{wu2024extrapolated}. These developments potentially enhance both convergence and applicability in complex inverse problems.

\subsection{Dictionary Learning-based Image Restoration} Dictionary learning is a powerful technique for image restoration. The goal is to learn a dictionary from given data so that each image patch can be sparsely represented by a linear combination of dictionary atoms. This method is especially effective for inverse problems like image denoising and deblurring. The seminal work by Aharon et al.~\cite{aharon2006k} introduced the K-SVD algorithm. It iteratively refines both the dictionary and the sparse representations, leading to significant improvements in image quality. Subsequent research has expanded on this foundation, exploring various aspects of dictionary learning and sparse coding~\cite{cai2008framelet, elad2006image, mairal2009online}. Moreover, recent advancements focus on integrating dictionary learning with deep learning techniques. This integration involves learning the dictionary from neural networks~\cite{papyan2017convolutional, scetbon2021deep} or combining it with the PnP method to handle the dictionary learning model~\cite{sujithra2022compressed, yang2022revisit}. However, most of these studies overlook the theoretical analysis of the dictionary-based model.

The aforementioned methods are primarily encompassed within the framework of problem~\eqref{objective}. We detail the most relevant works along with their problem settings, algorithm designs, and theoretical analyses in Table~\ref{tab: pro}. This summary clearly demonstrates that our proposed iBPLM and PnP-iBPLM algorithms comprehensively cover all these aspects, effectively addressing the limitations identified in previous studies and providing a robust, theoretically sound framework for optimisation.

\begin{table}[t!]
    \centering\small
        \caption{\textbf{Comparative Overview of Existing Techniques}. This table delineates differences in problem settings, algorithm designs, and convergence results. Abbreviations include: Non. (Non-convex), Para. (Parallel), Alter. (Alternating), Iner. (Inertial), PnP (Plug-and-Play), Sub. (Subsequential), and Global (Global convergence).}
    \label{tab: pro}
\resizebox{\hsize}{!}
{
    \begin{tabular}{c|cccccccc}
    \hline
     \multirow{2}{*}{\textsc{Existing Techniques}} & 
    \multicolumn{2}{c}{\cellcolor[HTML]{EFEFEF}Problem setting} & \multicolumn{4}{|c}{\cellcolor[HTML]{EFEFEF}Algorithm design} &\multicolumn{2}{|c}{\cellcolor[HTML]{EFEFEF}Theoretical results}\\ \cline{2-9}
                  &\cellcolor[HTML]{F5F5F5}
                  $h$ Non. & 
                  \cellcolor[HTML]{F5F5F5}$\theta_i$ Non. & \cellcolor[HTML]{F5F5F5}Para. & \cellcolor[HTML]{F5F5F5}Alter. &\cellcolor[HTML]{F5F5F5} Iner. &\cellcolor[HTML]{F5F5F5}PnP &\cellcolor[HTML]{F5F5F5} Sub. &\cellcolor[HTML]{F5F5F5} Global\\ \hline
    (Xu \& Yin, 2013) \cite{xu2013block}
                   &\ding{51}&\textcolor{red}{\ding{55}}&\textcolor{red}{\ding{55}}&\ding{51}&\ding{51}&\textcolor{red}{\ding{55}}&\ding{51}&\ding{51}\\ 
    (Razaviyayn et al., 2014) \cite{razaviyayn2014parallel}
                    &\ding{51} & \textcolor{red}{\ding{55}}& \ding{51} &\textcolor{red}{\ding{55}}&\ding{51}&\textcolor{red}{\ding{55}}&\ding{51}& \textcolor{red}{\ding{55}}\\ 
    (Hong et al., 2017) \cite{hong2017iteration}
                  &\textcolor{red}{\ding{55}}&\textcolor{red}{\ding{55}}&\textcolor{red}{\ding{55}}&\ding{51}&\ding{51}&\textcolor{red}{\ding{55}}&\ding{51}&\ding{51}\\ 
    (Xu \& Yin, 2017) \cite{xu2017globally}
     &\ding{51}&\ding{51}&\ding{51}&\textcolor{red}{\ding{55}}&\ding{51}&\textcolor{red}{\ding{55}}&\ding{51}&\ding{51}\\ 
     (Teboulle \& Vaisbourd, 2020) \cite{teboulle2020novel}
      &\ding{51}&\ding{51}&\ding{51}&\textcolor{red}{\ding{55}}&\textcolor{red}{\ding{55}}&\textcolor{red}{\ding{55}}&\ding{51}&\ding{51}\\ 
    (Hien et al., 2020) \cite{le2020inertial}
        &\ding{51}& \ding{51}& \textcolor{red}{\ding{55}}&\ding{51}&\ding{51}&\textcolor{red}{\ding{55}}&\ding{51}&\ding{51}\\  
    (Yang et al., 2020) \cite{yang2020inexact}
    &\ding{51}&\textcolor{red}{\ding{55}} &\textcolor{red}{\ding{55}} & \ding{51}&\textcolor{red}{\ding{55}}&\textcolor{red}{\ding{55}}&\ding{51}&\ding{51}\\ 
    (Sujithra \& Sugitha, 2022) \cite{sujithra2022compressed}
    &\ding{51}&\ding{51} &\textcolor{red}{\ding{55}} & \textcolor{red}{\ding{55}}&\textcolor{red}{\ding{55}}&\ding{51}& \textcolor{red}{\ding{55}}& \textcolor{red}{\ding{55}}\\ 
    (Phan et al., 2023) \cite{phan2023inertial}
     &\ding{51} &\ding{51} &\textcolor{red}{\ding{55}}& \ding{51}& \ding{51}&\textcolor{red}{\ding{55}}&\ding{51}&\ding{51}\\ 
    (Gan et al., 2024) \cite{gan2024block}
     & \ding{51} & \ding{51} &\ding{51} &\textcolor{red}{\ding{55}}&\textcolor{red}{\ding{55}}&\ding{51}& \textcolor{red}{\ding{55}}& \textcolor{red}{\ding{55}}\\ 
     \textcolor{blue}{\FiveStar} Ours
       & \textcolor{blue}{\ding{51}} & \textcolor{blue}{\ding{51}} & \textcolor{blue}{\ding{51}} & \textcolor{blue}{\ding{51}} & \textcolor{blue}{\ding{51}} & \textcolor{blue}{\ding{51}} &\textcolor{blue}{\ding{51}} & \textcolor{blue}{\ding{51}}\\ \hline         %
    \end{tabular}
    }  
\end{table}

\section{Proposed Method}\label{proposed}
This section details our proposed unified inertial block proximal linearised minimisation method and its Plug-and-Play (PnP) variant, along with establishing their theoretical convergence.

{\bf Notation.} Denote $x_{<i}:=(x_1, x_2, \ldots, x_{i-1})$, $x_{>i}:=(x_{i+1}, x_{i+2}, \ldots, x_p)$, and $x_{\neq i}:=(x_{<i}, x_{>i})$. The distance between a point $x \in \mathbb{R}^n$ and a closed and convex set $\mathcal{G} \subseteq \mathbb{R}^n$ is defined as $\operatorname{dist}(x, \mathcal{G}) := \min_{y \in \mathcal{G}} \|x-y\|$.
The indicator function for $\mathcal{G}$ assigns 0 to all $x \in \mathcal{G}$ and $+\infty$ otherwise.  
For $\gamma > 0$, the proximal operator of the function $f$ is defined by 
$
{\rm Prox}_{\gamma f}(x) :=  \arg\min_{y \in \mathbb{R}^n} \{f(y) + \frac{1}{2 \gamma} \|y-x\|^2\}. 
$

\subsection{Inertial block proximal linearised minimisation algorithm}
We first propose an inertial block proximal linearised minimisation (iBPLM) algorithm for solving the non-convex and non-smooth model of that~\eqref{objective}, referred to as Algorithm \ref{algo1}. This algorithm can be utilised to handle dictionary learning models with explicit regularisation.

\begin{algorithm}[h]
\caption{Inertial block proximal linearised minimisation algorithm (iBPLM)}\label{algo1}
\begin{algorithmic}
\STATE{{\bf Initialisation:} Select $w_{ij}\in[0,1]$ for $j<i$ and  $w_{ij}=1$ for $j\geq i$. Given $x_i^0$, set $x_i^{-1}=x_i^{0}$.
}
\FOR{$k=1, 2, 3, \ldots$}
\STATE{Update $x_j^{k,i}= (1-w_{ij})x_j^{k+1}+w_{ij}x_j^k,~~ \forall i,j=1,2,\ldots,p$.}
\FOR{$i=1, 2, 3, \ldots, p$}
\STATE{Choose $\alpha_i^k\in[0,1)$.}
\STATE{Update $\hat{x}_i^k=x_i^k+\alpha_i^k(x_i^k-x_i^{k-1})$;}
\STATE{Update $x_i^{k+1}={\rm Prox}_{\gamma_i^k\theta_i}(\hat{x}_i^{k}-\gamma_i^k \nabla_i h({\hat x}_i^k, x_{\neq i}^{k,i}))$.}
\ENDFOR
\IF{stopping criterion is satisfied,}
\STATE{
Return $(x_1^k, x_2^k, \ldots, x_p^k)$.
}
\ENDIF
\ENDFOR
\end{algorithmic}
\end{algorithm}

By introducing a weight matrix \(W := (w_{ij})_{p \times p}\), Algorithm \ref{algo1} establishes a unified framework. This framework incorporates the Jacobi-type parallel update rule among task blocks, as well as the Gauss-Seidel-type alternating update within each block. Specifically, the matrix \(W\) is employed to compute \(x_j^{k,i}\), defined as \(x_j^{k,i} = (1-w_{ij})x_j^{k+1} + w_{ij}x_j^k\) for all \(i, j = 1, 2, \ldots, p\), where \(w_{ij} \in [0,1]\) for \(j < i\) and \(w_{ij} = 1\) for \(j \geq i\). When \(w_{ij} = 1\) and \(w_{ij} = 0\) $\forall$ \(j < i\), the matrix $W=W_1$ and $W=W_2$ defined by:
\begin{equation}\label{W_define}
  W_1=\left[\begin{array}{cccc}
1 & 1 & \cdots & 1 \\
1 & 1 & \cdots & 1 \\
\vdots & \ddots & \ddots & \vdots \\
1 & \cdots & 1 & 1
\end{array}\right] \quad \text { and } \quad W_2=\left[\begin{array}{cccc}
1 & 1 & \cdots & 1 \\
0 & 1 & \cdots & 1 \\
\vdots & \ddots & \ddots & \vdots \\
0 & \cdots & 0 & 1
\end{array}\right],    
\end{equation}
 
Algorithm \ref{algo1} simplifies to the Jacobi-type and Gauss-Seidel-type BPLM, respectively. Additionally, to accelerate the convergence of the unified algorithm, an inertial step is incorporated into the updates of the block-subproblems.

We review the definitions of subdifferential and Kurdyka-{\L}ojasiewicz (KL) property for further analysis.
\begin{definition} \label{def2.1}{\rm\cite{attouch2013convergence, bolte2014alternating}} \rm(Subdifferentials)
Let $f:\mathbb {R}^{n}\rightarrow (-\infty,+\infty]$ be a proper and lower semicontinuous function.
\begin{itemize}
\item[(i)] For a given ${ x}\in {\rm dom} f$, the Fr\'{e}chet subdifferential of $f$ at ${ x}$, written by $\hat{\partial}f({ x})$, is the set of all vectors ${ u}\in \mathbb{R}^n$ satisfying\vspace{-0.05in}
$$\liminf_{{ y}\neq { x}, { y}\rightarrow { x}}\frac{f({ y})-f({ x})-\langle { u},{ y}-{ x}\rangle}{\|{ y}-{ x}\|}\geq0,\vspace{-0.05in}$$
and we set $\hat{\partial}f({ x}) = \emptyset$ when ${ x}\notin {\rm dom}f$.
\vspace{0.2cm}
\item[(ii)] The limiting-subdifferential, or simply the subdifferential, of $f$ at ${ x}$, written by $\partial f({ x})$, is defined by 
\begin{equation}\label{pf}
\partial f({ x}):=\{{ u}\in\mathbb{R}^n\; | \; \exists ~ { x}^k\rightarrow { x}, ~{\rm s.t.}~f({ x}^k)\rightarrow f({ x})
 ~{\rm and}~ \hat{\partial}f({ x}^k) \ni { u}^k \rightarrow { u} \}.  \end{equation}

\item[(iii)] A point ${ x}^*$ is called (limiting-)critical point or stationary point of $f$ if it satisfies $0\in\partial f({ x}^*)$, and the set of critical points of $f$ is denoted by ${\rm crit} f$.
\end{itemize}
\end{definition}

Next, we recall the KL property \cite{attouch2010proximal, bolte2014alternating}, which is important in the convergence analysis.
\begin{definition}\label{def2.2}(KL property and KL function)
Let $f:\mathbb{R}^{n}\rightarrow (-\infty,+\infty]$ be a proper and lower semicontinuous function.
\begin{itemize}
\item[$(a)$] The function $f$ is said to have KL property at ${ x}^*\in{\rm dom}(\partial f)$ if there exist $\eta\in(0,+\infty]$, a neighborhood $U$ of ${ x}^*$ and a continuous and concave function $\varphi:[0,\eta)\rightarrow \mathbb{R}_+$ such that
\begin{itemize}
\item[\rm(i)] $\varphi(0)=0$ and $\varphi$ is continuously differentiable on $(0,\eta)$ with $\varphi'>0$;

\item[\rm(ii)] for all ${ x}\in U\cap\{{ z} \in \mathbb{R}^n\; | \; f({ x}^*) < f({ z}) < f({ x}^*)+\eta\}$, the following KL inequality holds: 
\begin{equation} \label{kl}
\varphi'(f({ x})-f({ x}^*)){\rm dist}(0,\partial f({ x}))\geq1. 
\end{equation}
\end{itemize}

\item[$(b)$] If $f$ satisfies the KL property at each point of dom$(\partial f)$, then $f$ is called a KL function.
\end{itemize}
\end{definition}

\begin{remark}
KL functions exhibit remarkable versatility and are extensively applied in various domains, including semi-algebraic analysis, subanalytic analysis, and log-exp functions.
Concrete examples of KL functions can be found in \cite{attouch2010proximal, attouch2013convergence, bolte2014alternating}. These examples encompass many common instances such as $\ell_p$-norm (where $p\geq 0$), indicator functions of semi-algebraic sets, and a majority of convex functions.
\end{remark}

Now we begin to analyse the theoretical convergence of iBPLM. To do so, we need the following mild assumption.

\begin{assumption}
The gradient of $h$ is block-coordinate-wise Lipschitz continuous; that is, for a given $x_{\neq i}$, it holds that
$$\|\nabla_ih(x_i,x_{\neq i})-\nabla_ih(y_i,x_{\neq i})\|\leq L_i(x_{\neq i})\|x_i-y_i\|,  \text{ for }i=1,2,\ldots,p.$$
\end{assumption}

Denote $L_i^k:=L_i(x_{<i}^{k+1},x_{>i}^k)$, and $L^k:=\max\{L_i^k, i=1,2,\ldots,p\}$. We summarise the property of $F$ in the following proposition. 
\begin{proposition}\label{NSDP}
For the sequence $\{x^k\}$ generated by the proposed iBPLM algorithm, it must satisfy
\begin{equation} 
\begin{aligned}
  & F(x^k) +\sum_{i=1}^{p}\frac{\xi_i^k}{2}\|x_i^{k}-x_i^{k-1}\|^2\geq F(x^{k+1})+  \sum_{i=1}^{p}\frac{\delta_i^k}{2}\Vert x_i^{k+1}- x_i^k\Vert^2, k=1,2,\ldots
\end{aligned}
\end{equation}
where $\xi_i^k:=\frac{\alpha_i^k\gamma_i^kL_i^k+\alpha_i^k}{\gamma_i^k}$, $\delta_i^k:=\frac{1-\alpha_i^k -\gamma_i^k L_i^k-\alpha_i^k\gamma_i^kL_i^k-\gamma_i^kw'_iL^k}{\gamma_i^k}$, and $w'_i=\sum_{q=i+1}^{p}w_{qi}$.
\end{proposition}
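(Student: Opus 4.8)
The plan is to establish the sufficient-decrease inequality by bounding $F(x^{k+1})-F(x^k)$ one block at a time and summing, combining the optimality of each proximal subproblem with the block-coordinate descent lemma furnished by the Assumption. First I would record the variational inequality produced by the update $x_i^{k+1}=\mathrm{Prox}_{\gamma_i^k\theta_i}(\hat x_i^k-\gamma_i^k g_i^k)$ with $g_i^k:=\nabla_i h(\hat x_i^k,x_{\neq i}^{k,i})$: since $x_i^{k+1}$ minimises $y\mapsto\theta_i(y)+\frac{1}{2\gamma_i^k}\|y-\hat x_i^k+\gamma_i^k g_i^k\|^2$, comparing its value at $x_i^{k+1}$ with the value at $x_i^k$ and cancelling the common $\|g_i^k\|^2$ term yields $\theta_i(x_i^{k+1})-\theta_i(x_i^k)\le\langle g_i^k,x_i^k-x_i^{k+1}\rangle+\frac{1}{2\gamma_i^k}\|x_i^k-\hat x_i^k\|^2-\frac{1}{2\gamma_i^k}\|x_i^{k+1}-\hat x_i^k\|^2$.

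For the smooth part I would telescope through the intermediate iterates $z^i:=(x_{\le i}^{k+1},x_{>i}^k)$, writing $h(x^{k+1})-h(x^k)=\sum_{i=1}^p\big(h(x_{<i}^{k+1},x_i^{k+1},x_{>i}^k)-h(x_{<i}^{k+1},x_i^k,x_{>i}^k)\big)$. The key observation is that each summand varies only block $i$ with the remaining blocks frozen at $(x_{<i}^{k+1},x_{>i}^k)$, which is exactly the configuration defining $L_i^k$. The two-sided form of the descent lemma (a consequence of block Lipschitz continuity of $\nabla_i h$) then bounds each summand by $\langle\nabla_i h(\hat x_i^k,x_{<i}^{k+1},x_{>i}^k),x_i^{k+1}-x_i^k\rangle+\frac{L_i^k}{2}\big(\|x_i^{k+1}-\hat x_i^k\|^2+\|x_i^k-\hat x_i^k\|^2\big)$. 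Adding this to the proximal inequality, the gradient inner products would cancel if the two gradients were evaluated at the same point, and the inertial bookkeeping — substituting $\hat x_i^k-x_i^k=\alpha_i^k(x_i^k-x_i^{k-1})$, expanding $\|x_i^{k+1}-\hat x_i^k\|^2$ and $\|x_i^k-\hat x_i^k\|^2$, and applying Young's inequality to the cross term $\langle x_i^{k+1}-x_i^k,x_i^k-x_i^{k-1}\rangle$ together with the bound $(\alpha_i^k)^2\le\alpha_i^k$ — should produce the coefficient $\xi_i^k$ on $\|x_i^k-x_i^{k-1}\|^2$ and the non-coupling part of $\delta_i^k$ on $\|x_i^{k+1}-x_i^k\|^2$.

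The genuinely delicate step, and the one I expect to be the main obstacle, is the mismatch between the gradient $g_i^k=\nabla_i h(\hat x_i^k,x_{\neq i}^{k,i})$ used in the update and the gradient $\nabla_i h(\hat x_i^k,x_{<i}^{k+1},x_{>i}^k)$ produced by the telescoping. These differ only in the coordinates $j<i$, where $x_j^{k,i}=(1-w_{ij})x_j^{k+1}+w_{ij}x_j^k$ deviates from $x_j^{k+1}$ by $w_{ij}(x_j^k-x_j^{k+1})$; hence by the Lipschitz Assumption and the uniform bound $L^k=\max_i L_i^k$ the residual inner product $\langle\nabla_i h(\hat x_i^k,x_{<i}^{k+1},x_{>i}^k)-g_i^k,\,x_i^{k+1}-x_i^k\rangle$ is controlled by $L^k\sum_{j<i}w_{ij}\|x_j^{k+1}-x_j^k\|\,\|x_i^{k+1}-x_i^k\|$. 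The crux is then the bookkeeping across the entire weight matrix: after applying Young's inequality to each product and summing over all blocks, the coefficient multiplying a fixed $\|x_i^{k+1}-x_i^k\|^2$ must collapse to the column sum $w_i'=\sum_{q=i+1}^p w_{qi}$, which is precisely the term $-\gamma_i^k w_i' L^k$ in $\delta_i^k$. I would reorganise the double sum $\sum_i\sum_{j<i}$ so that the contribution attributed to block $i$ is gathered from the updates of the later blocks $q>i$ that lag on coordinate $i$ through $w_{qi}$; verifying that this reindexing yields exactly $w_i'$, with no leftover row-sum contribution, is where the argument must be conducted carefully, and I anticipate it will rely on a weighted form of Young's inequality tuned to the step-size regime $\gamma_i^k L_i^k<1$ so that the stray $\|x_i^{k+1}-x_i^k\|^2$ terms are absorbed rather than retained.

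Finally I would sum the per-block estimates, use $\sum_i\big(\theta_i(x_i^{k+1})-\theta_i(x_i^k)\big)+\big(h(x^{k+1})-h(x^k)\big)=F(x^{k+1})-F(x^k)$, and rearrange into the claimed form, reading off $\xi_i^k$ and $\delta_i^k$ from the accumulated coefficients.
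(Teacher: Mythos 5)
Your route is essentially the paper's: proximal optimality of the update compared at $x_i^{k+1}$ versus $x_i^k$, block-wise telescoping of $F$ through the intermediate points $(x_{\le i}^{k+1},x_{>i}^k)$, a descent lemma in block $i$, splitting off the mismatch between the gradient used in the update and the gradient demanded by the telescoping, and Young's inequality to convert cross terms into squares. The only substantive difference is where you anchor the descent lemma: you anchor it at $\hat x_i^k$, so your gradient mismatch lives only in the coordinates $j<i$, whereas the paper anchors at $x_i^k$ (its inequality \eqref{add2}) and therefore carries an additional coordinate-$i$ mismatch between $x_i^k$ and $\hat x_i^k=x_i^k+\alpha_i^k(x_i^k-x_i^{k-1})$, which it absorbs with one more application of Young's inequality; that extra step is precisely the origin of the $\alpha_i^k\gamma_i^kL_i^k$ contributions in $\xi_i^k$ and $\delta_i^k$. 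Tracking your constants, your variant yields a bound at least as strong as the stated one (your coefficient on $\|x_i^{k+1}-x_i^k\|^2$ is $\tfrac{(1-\alpha_i^k)(1-\gamma_i^kL_i^k)}{2\gamma_i^k}$, which dominates the non-coupling part of $\tfrac{\delta_i^k}{2}$, and your $\|x_i^k-x_i^{k-1}\|^2$ coefficient is at most $\tfrac{\xi_i^k}{2}$ after using $(\alpha_i^k)^2\le\alpha_i^k$), so the proposition follows a fortiori from your bookkeeping. Note also that both you and the paper tacitly use Lipschitz continuity of $\nabla_i h$ in the blocks $x_j$, $j\neq i$, with constant $L^k$, which is stronger than the stated block-coordinate-wise assumption.

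The step you single out as the main obstacle is exactly where the paper's own argument is loose, and your suspicion that the reindexing does not collapse to the column sum alone is justified. The paper passes from $\sum_{i}\|x_i^{k+1}-x_i^k\|\bigl(\sum_{j<i}w_{ij}L^k\|x_j^{k+1}-x_j^k\|\bigr)$ to $\sum_i\tfrac{w_i'L^k}{2}\|x_i^{k+1}-x_i^k\|^2$ without justification. Symmetric Young's inequality applied to each product gives, for block $i$, the coefficient $\tfrac{L^k}{2}\bigl(\sum_{j<i}w_{ij}+\sum_{q>i}w_{qi}\bigr)$, i.e.\ half the row sum plus half the column sum, and no weighted form of Young's inequality can eliminate the row-sum contribution: already for $p=2$ with $w_{21}=1$ the paper's claimed bound reduces to $\|u\|\,\|v\|\le\tfrac12\|u\|^2$, which fails. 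So as written the constant $w_i'$ in $\delta_i^k$ should be replaced by $w_i'+\sum_{j<i}w_{ij}$ (or the step needs an argument the paper does not supply); this does not affect the qualitative sufficient-decrease statement, since the subsequent lemmas only require $\delta_i^k$ to be positive and bounded away from zero under suitably restricted step sizes, but you are right that the exact coefficient cannot be obtained by the advertised reindexing.
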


\begin{proof}
It follows from Algorithm \ref{algo1} that
$$x_i^{k+1}=\underset{x_i}{\arg\min}~ \left\{\langle x_i-\hat{x}_i^k, \nabla_ih({\hat x}_i^k, x_{\neq i}^{k,i})\rangle+\frac{1}{2\gamma_i^k}\Vert x_i-\hat{x}_i^k\Vert^2 + \theta_i(x_i)\right\},$$
which implies that
\begin{equation}\label{minopt} 
\begin{aligned}
  & \langle x_i^k-\hat{x}_i^k, \nabla_ih({\hat x}_i^k, x_{\neq i}^{k,i})\rangle+\frac{1}{2\gamma_i^k}\Vert x_i^k-\hat{x}_i^k\Vert^2 + \theta_i(x_i^k)\\
   & \geq \langle x_i^{k+1}-\hat{x}_i^k, \nabla_ih({\hat x}_i^k, x_{\neq i}^{k,i})\rangle+\frac{1}{2\gamma_i^k}\Vert x_i^{k+1}-\hat{x}_i^k\Vert^2 + \theta_i(x_i^{k+1}).
\end{aligned}
\end{equation}
Hence,
\begin{equation} \label{add1}
\begin{aligned}
  & \langle x_i^k-x_i^{k+1}, \nabla_ih({\hat x}_i^k, x_{\neq i}^{k,i})\rangle+\frac{1}{2\gamma_i^k}\Vert x_i^k-\hat{x}_i^k\Vert^2 + \theta_i(x_i^k)\\
   & \geq  \frac{1}{2\gamma_i^k}\Vert x_i^{k+1}-\hat{x}_i^k\Vert^2 + \theta_i(x_i^{k+1}).
\end{aligned}
\end{equation}
Since $\nabla_i h$ is Lipschitz continuous, we have
\begin{equation}\label{add2}
 h({x}_{<i}^{k+1},{x}_i^{k+1}, x_{>i}^k)-h({x}_{<i}^{k+1},{x}_i^{k}, x_{>i}^k)-\langle x_i^{k+1}-x_i^k,\nabla_i h({x}_{<i}^{k+1},{x}_i^{k}, x_{>i}^k)\rangle \leq \frac{L_i^k}{2}\|x_i^{k+1}-x_i^k\|^2. 
\end{equation}
Combining \eqref{add1} and \eqref{add2}, and recalling the definition of $F$ in \eqref{objective}, we have
\begin{equation} \label{add3}
\begin{aligned}
  & F({x}_{<i}^{k+1},{x}_i^{k}, x_{>i}^k)+\langle x_i^{k+1}-x_i^{k}, \nabla_i h({x}_{<i}^{k+1},{x}_i^{k}, x_{>i}^k)-\nabla_ih({\hat x}_i^k, x_{\neq i}^{k,i})\rangle  \\
   & \geq F({x}_{<i}^{k+1},{x}_i^{k+1}, x_{>i}^k)+  \frac{1}{2\gamma_i^k}\Vert x_i^{k+1}-\hat{x}_i^k\Vert^2 -\frac{1}{2\gamma_i^k}\Vert x_i^k-\hat{x}_i^k\Vert^2-\frac{L_i^k}{2}\|x_i^{k+1}-x_i^k\|^2\\
   &=F({x}_{<i}^{k+1},{x}_i^{k+1}, x_{>i}^k)+  \frac{1}{2\gamma_i^k}\Vert x_i^{k+1}- {x}_i^k\Vert^2 -\frac{\alpha_i^k}{\gamma_i^k}\langle x_i^{k+1}-x_i^k,x_i^k-x_i^{k-1} \rangle-\frac{L_i^k}{2}\|x_i^{k+1}-x_i^k\|^2\\
   &\geq F({x}_{<i}^{k+1},{x}_i^{k+1}, x_{>i}^k)+  \frac{1-\alpha_i^k-\gamma_i^k L_i^k}{2\gamma_i^k}\Vert x_i^{k+1}- {x}_i^k\Vert^2 -\frac{\alpha_i^k}{2\gamma_i^k}\|x_i^k-x_i^{k-1} \|^2,
\end{aligned}
\end{equation}
Note that 
\begin{equation} \label{add4}
\begin{aligned}
  & \langle x_i^{k+1}-x_i^{k}, \nabla_i h({x}_{<i}^{k+1},{x}_i^{k}, x_{>i}^k)-\nabla_ih({\hat x}_i^k, x_{\neq i}^{k,i})\rangle\\  
  &  = \langle x_i^{k+1}-x_i^{k}, \nabla_i h({x}_{<i}^{k+1},{x}_i^{k}, x_{>i}^k)-\nabla_ih(x^{k,i})\rangle+ 
  \langle x_i^{k+1}-x_i^{k}, \nabla_i h(x^{k,i})-\nabla_ih({\hat x}_i^k, x_{\neq i}^{k,i})\rangle\\
  &\leq   \|x_i^{k+1}-x_i^{k}\|\|\nabla_i h({x}_{<i}^{k+1},{x}_i^{k}, x_{>i}^k)-\nabla_ih(x^{k,i})\|+\frac{\alpha_i^kL_i^k}{2} \|x_i^{k+1}-x_i^{k}\|^2+\frac{\alpha_i^kL_i^k}{2}\|x_i^{k}-x_i^{k-1}\|^2\\
   &\leq   \|x_i^{k+1}-x_i^{k}\|\left(\sum_{j=1}^{i-1}w_{ij}L^k \|x_j^{k+1}-x_j^{k}\|\right)+\frac{\alpha_i^kL_i^k}{2} \|x_i^{k+1}-x_i^{k}\|^2+\frac{\alpha_i^kL_i^k}{2}\|x_i^{k}-x_i^{k-1}\|^2,
\end{aligned}
\end{equation}
where $L^k=\max_{i}\{L_i^k,i=1,2,\ldots,p\}$. Since  $F({x}^{k+1})-F(x^k)=\sum_{i=1}^{p}(F({x}_{<i}^{k+1},{x}_i^{k+1}, x_{>i}^k)-F({x}_{<i}^{k+1},{x}_i^{k}, x_{>i}^k))$ with $x^{k+1} = \{x_i^{k+1}, \cdots, x_p^{k+1}\}$ and $x^{k} = \{x_i^{k}, \cdots, x_p^{k}\}$
according to the definition of $F$ in \eqref{objective}, it follows from \eqref{add3} and \eqref{add4} that
\begin{equation} 
\begin{aligned}
  & F(x^k) +\sum_{i=1}^{p}\frac{\alpha_i^k\gamma_i^kL_i^k+\alpha_i^k}{2\gamma_i^k}\|x_i^{k}-x_i^{k-1}\|^2\\
   &\geq F(x^{k+1})+  \sum_{i=1}^{p}\frac{1-\alpha_i^k -\gamma_i^k L_i^k-\alpha_i^k\gamma_i^kL_i^k}{2\gamma_i^k}\Vert x_i^{k+1}- x_i^k\Vert^2\\
   &\quad-\sum_{i=1}^{p}\|x_i^{k+1}-x_i^{k}\|\left(\sum_{j=1}^{i-1}w_{ij}L^k \|x_j^{k+1}-x_j^{k}\|\right)\\
    &\geq F(x^{k+1})+  \sum_{i=1}^{p}\frac{1-\alpha_i^k -\gamma_i^k L_i^k-\alpha_i^k\gamma_i^kL_i^k}{2\gamma_i^k}\Vert x_i^{k+1}- x_i^k\Vert^2
   -\frac{w'_iL^k}{2}\sum_{i=1}^{p}\|x_i^{k+1}-x_i^{k}\|^2\\
   &= F(x^{k+1})+  \sum_{i=1}^{p}\frac{1-\alpha_i^k -\gamma_i^k L_i^k-\alpha_i^k\gamma_i^kL_i^k-\gamma_i^kw'_iL^k}{2\gamma_i^k}\Vert x_i^{k+1}- x_i^k\Vert^2,
\end{aligned}
\end{equation}
where $w'_i=\sum_{q=i+1}^{p}w_{qi}$.
This completes the proof.
\end{proof}

Note that if the sequences \(\{\xi_i^k\}\) and \(\{\delta_i^k\}\) are positive and adhere to certain relationships, the variant of the objective function in~\eqref{objective} is non-increasing with respect to the iteration number \(k\). The following lemma outlines the parameter conditions. 

\begin{lemma}\label{bound}
    Let $\{x^k\}$ be the sequence generated by the iBPLM algorithm.   If  $\xi_i^{k+1}\leq C \delta_i^k$ for some constant $C\in(0,1)$, and there exists a positive parameter $\underline{l}$ such that $\min_{i,k}\{\frac{\delta_i^k}{2}\}\geq\underline{l}$ then we have
        \begin{equation}
            \sum_{k=0}^{+\infty} \sum_{i=1}^m\left\|x_i^{k+1}-x_i^k\right\|^2<+\infty.
        \end{equation}
\end{lemma}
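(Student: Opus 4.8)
The plan is to build a suitable Lyapunov (merit) function from Proposition~\ref{NSDP} and extract the summability from a telescoping argument. The key observation is that Proposition~\ref{NSDP} already delivers a ``sufficient decrease up to inertial terms'' inequality, so the task reduces to absorbing the inertial terms $\sum_i \frac{\xi_i^k}{2}\|x_i^k-x_i^{k-1}\|^2$ into a single monotone quantity. The hypothesis $\xi_i^{k+1}\le C\delta_i^k$ with $C\in(0,1)$ is precisely what makes this absorption work: the inertial coefficient at step $k+1$ is dominated by a strict fraction of the descent coefficient at step $k$.

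Concretely, I would define the merit function
\begin{equation*}
\Phi_k := F(x^k) + \sum_{i=1}^{p}\frac{\xi_i^k}{2}\|x_i^k-x_i^{k-1}\|^2.
\end{equation*}
Proposition~\ref{NSDP} then reads $\Phi_k \ge F(x^{k+1}) + \sum_{i=1}^{p}\frac{\delta_i^k}{2}\|x_i^{k+1}-x_i^k\|^2$. Subtracting $\Phi_{k+1} = F(x^{k+1}) + \sum_{i=1}^{p}\frac{\xi_i^{k+1}}{2}\|x_i^{k+1}-x_i^k\|^2$ gives
\begin{equation*}
\Phi_{k+1}-\Phi_k \le \sum_{i=1}^{p}\frac{\xi_i^{k+1}-\delta_i^k}{2}\|x_i^{k+1}-x_i^k\|^2.
\end{equation*}
Invoking $\xi_i^{k+1}-\delta_i^k \le (C-1)\delta_i^k = -(1-C)\delta_i^k$ together with $\frac{\delta_i^k}{2}\ge \underline{l}$, I obtain the clean descent estimate
\begin{equation*}
\Phi_{k+1} \le \Phi_k - (1-C)\,\underline{l}\sum_{i=1}^{p}\|x_i^{k+1}-x_i^k\|^2,
\end{equation*}
which shows in particular that $\{\Phi_k\}$ is non-increasing since $1-C>0$ and $\underline{l}>0$.

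Finally I would sum this inequality from $k=0$ to $N$, telescoping the $\Phi_k$ terms to get $(1-C)\underline{l}\sum_{k=0}^{N}\sum_{i=1}^{p}\|x_i^{k+1}-x_i^k\|^2 \le \Phi_0-\Phi_{N+1}$, and then let $N\to\infty$. The main obstacle — and the one place where an extra hypothesis is genuinely needed — is establishing that $\Phi_k$ is bounded below, which is required to bound the right-hand side uniformly in $N$. Since the inertial term in $\Phi_k$ is non-negative, $\Phi_k \ge F(x^k)$, so it suffices that $F$ be bounded below (a standard coercivity/boundedness assumption for problem~\eqref{objective}); with $\inf F > -\infty$ the telescoped bound becomes $\sum_{k=0}^{\infty}\sum_{i=1}^{p}\|x_i^{k+1}-x_i^k\|^2 \le \frac{\Phi_0-\inf F}{(1-C)\underline{l}} < +\infty$, which is the claim (with $m=p$). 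Everything else is routine bookkeeping; the only subtle design choice is using $\xi_i^{k}$ rather than $\delta_i^{k}$ as the coefficient inside $\Phi_k$, which is what lets the inertial energy at one step be financed by the descent at the previous step.
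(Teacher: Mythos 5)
Your proof is correct and follows essentially the same route as the paper's: both telescope the descent inequality of Proposition~\ref{NSDP} after using $\xi_i^{k+1}\le C\delta_i^k$ to let the inertial term at step $k+1$ be absorbed by a $C$-fraction of the descent term at step $k$, leaving a net coefficient $(1-C)\underline{l}$ on the squared increments. You are also right that a lower bound on $F$ is needed to conclude; the paper's proof stops at the telescoped inequality and uses this implicitly without stating it.
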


\begin{proof}
    (i) It follows from Proposition \ref{NSDP} that
\begin{equation}
F(x^k) +\sum_{i=1}^{p}\frac{\xi_i^k}{2}\|x_i^{k}-x_i^{k-1}\|^2\geq F(x^{k+1})+  \sum_{i=1}^{p}\frac{\delta_i^k}{2}\Vert x_i^{k+1}- x_i^k\Vert^2, k=1,2,\ldots,
\end{equation}
where $\xi_i^k$ and $\delta_i^k$ are positive parameters and $\xi_i^{k+1}\leq C \delta_i^k$ for some constant $C\in(0,1)$. Hence, 
we have
\begin{equation}
\label{sumF}
    F(x^{k+1})+
    \sum_{i=1}^p\frac{\delta_i^k}{2}\Vert x_i^{k+1}-x_i^k\Vert^2
    \leq
    F(x^k)+\sum_{i=1}^pC\frac{\delta_i^{k-1}}{2}\left\|x_i^k-x_i^{k-1}\right\|^2.
\end{equation}
Summing up $k=0$ to $K-1$, we get
\begin{equation}
\begin{aligned}
    &F(x^K)+\sum_{i=1}^p\frac{\delta_i^{K-1}}{2}\Vert x_i^{K}-x_i^{K-1}\Vert^2+(1-C)\sum_{k=0}^{K-1}\sum_{i=1}^p\frac{\delta_i^k}{2}\Vert x_i^{k+1}-x_i^k\Vert^2\\
    \leq&
    F(x^0)+\sum_{i=1}^pC\frac{\delta_i^{-1}}{2}\left\|x_i^0-x_i^{-1}\right\|^2.
\end{aligned}
\end{equation}
This completes the proof. 
\end{proof}

\begin{remark}
Indeed, it is not difficult to guarantee the parameter condition $\xi_i^{k+1}\leq C \delta_i^k$.
For instance, if $W=W_2$ and the parameters are constant such that $\alpha_i^k=\alpha_i$ and $\gamma_i^k=\gamma_i$ for all $k$, it holds that $w'_i=0$, and the condition  $\xi_i^{k+1}\leq C \delta_i^k$ can be satisfied if
\begin{equation}
    0<\gamma_i<\frac{1}{L_i},\quad {\rm and }\quad 0\leq\alpha_i<\frac{1-\gamma_i L_i}{2+2\gamma L_i}.
\end{equation}
Note that in the implementation, as discussed in~\cite{li2016douglas,yang2017alternating}, one can initialise the algorithm with larger values of \(\alpha_i\) and \(\gamma_i\). If these parameters do not meet the required conditions, they should be decreased by a constant ratio. This adjustment is necessary if the sequence generated by the algorithm becomes unbounded, or if the successive changes in the sequence do not diminish sufficiently fast.
\end{remark}

Our first theoretical result concerns the subsequential convergence of the iBPLM to a stationary point of~\eqref{objective}. This result is detailed in the following theorem. 

\begin{theorem}\label{sub}
    Suppose that Proposition \ref{NSDP} and Lemma \ref{bound} hold. 
    Assume that the sequence $\left\{x^k\right\}$ generated by the proposed iBPLM Algorithm is bounded. Every limit point $x^*$ of $\left\{x^k\right\}$ is a critical point of problem \eqref{objective}.
\end{theorem}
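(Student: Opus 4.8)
The plan is to verify the three ingredients required by the closedness of the limiting subdifferential in Definition~\ref{def2.1}(ii): along a subsequence converging to $x^*$, I will produce subgradients $v^{k+1}\in\partial F(x^{k+1})$ tending to $0$ while simultaneously showing $F(x^{k+1})\to F(x^*)$. Feeding the resulting triple into the definition of $\partial F$ then yields $0\in\partial F(x^*)$, i.e. $x^*\in{\rm crit}\,F$. Throughout I use the separable structure of $\sum_i\theta_i$, which gives $\partial F(x)$ a product form: its $i$-th block is $\partial\theta_i(x_i)+\nabla_i h(x)$.

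First I would exploit Lemma~\ref{bound}. Convergence of $\sum_k\sum_i\|x_i^{k+1}-x_i^k\|^2$ forces $\|x_i^{k+1}-x_i^k\|\to0$ for every block $i$. Since $\{x^k\}$ is bounded, Bolzano--Weierstrass supplies a subsequence $x^{k_j}\to x^*$, and the vanishing of successive differences upgrades this to $x^{k_j+1}\to x^*$ and $x^{k_j-1}\to x^*$. The same fact controls the auxiliary quantities: from $\hat{x}_i^k-x_i^k=\alpha_i^k(x_i^k-x_i^{k-1})$ with $\alpha_i^k\in[0,1)$ one gets $\hat{x}_i^{k}-x_i^k\to0$, and from $x_j^{k,i}=(1-w_{ij})x_j^{k+1}+w_{ij}x_j^k$ one gets $x_j^{k,i}-x_j^k\to0$. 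Consequently, along the subsequence the full inner argument $(\hat{x}_i^{k_j},x_{\neq i}^{k_j,i})$ converges to $x^*$.

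Next I would read off the subgradient from the proximal step. The optimality condition of the minimisation defining $x_i^{k+1}$ is
\[
-\tfrac{1}{\gamma_i^k}\bigl(x_i^{k+1}-\hat{x}_i^k\bigr)-\nabla_i h(\hat{x}_i^k,x_{\neq i}^{k,i})\in\partial\theta_i(x_i^{k+1}),
\]
so adding $\nabla_i h(x^{k+1})$ gives the $i$-th block element
\[
v_i^{k+1}:=-\tfrac{1}{\gamma_i^k}\bigl(x_i^{k+1}-\hat{x}_i^k\bigr)+\nabla_i h(x^{k+1})-\nabla_i h(\hat{x}_i^k,x_{\neq i}^{k,i})\in\partial\theta_i(x_i^{k+1})+\nabla_i h(x^{k+1}).
\]
Along the subsequence $v_i^{k_j+1}\to0$: the first term vanishes because $x_i^{k+1}-\hat{x}_i^k\to0$ and the step sizes stay bounded away from $0$ (as in the constant-parameter regime of the Remark, so $1/\gamma_i^k$ is bounded), while the gradient difference vanishes by continuity of $\nabla_i h$ and the argument convergence just established. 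Stacking blocks yields $v^{k_j+1}\in\partial F(x^{k_j+1})$ with $v^{k_j+1}\to0$.

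The last and most delicate ingredient is $F(x^{k_j+1})\to F(x^*)$, which I expect to be the main obstacle, since the limiting subdifferential is not closed without the companion convergence of function values. Lower semicontinuity of each $\theta_i$ yields $\liminf_j\theta_i(x_i^{k_j+1})\ge\theta_i(x_i^*)$ immediately; for the reverse inequality I would use that $x_i^{k+1}$ \emph{minimises} the proximal subproblem and insert the competitor $x_i^*$, giving
\[
\langle x_i^{k+1}-\hat{x}_i^k,\nabla_i h(\hat{x}_i^k,x_{\neq i}^{k,i})\rangle+\tfrac{1}{2\gamma_i^k}\|x_i^{k+1}-\hat{x}_i^k\|^2+\theta_i(x_i^{k+1})\le\langle x_i^*-\hat{x}_i^k,\nabla_i h(\hat{x}_i^k,x_{\neq i}^{k,i})\rangle+\tfrac{1}{2\gamma_i^k}\|x_i^*-\hat{x}_i^k\|^2+\theta_i(x_i^*).
\]
Taking $\limsup$ along the subsequence, every bracketed and squared term tends to $0$ (using $x_i^{k+1}-\hat{x}_i^k\to0$, $\hat{x}_i^{k_j}\to x_i^*$, and boundedness of the gradients and of $1/\gamma_i^k$), leaving $\limsup_j\theta_i(x_i^{k_j+1})\le\theta_i(x_i^*)$. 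Hence $\theta_i(x_i^{k_j+1})\to\theta_i(x_i^*)$, and continuity of $h$ gives $F(x^{k_j+1})\to F(x^*)$. Applying Definition~\ref{def2.1}(ii) to the triple $\bigl(x^{k_j+1},F(x^{k_j+1}),v^{k_j+1}\bigr)$ concludes $0\in\partial F(x^*)$, so $x^*$ is a critical point of~\eqref{objective}.
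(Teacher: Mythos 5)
Your proof is correct, and it reaches the conclusion by a genuinely different final step than the paper, although the two arguments share their key technical ingredient. Both proofs insert the competitor $x_i^*$ into the proximal subproblem defining $x_i^{k+1}$ (the paper does this by taking $x_i=x_i^*$ in \eqref{minopt}) and use $\|x^{k+1}-x^k\|\to 0$ from Lemma \ref{bound} to kill the linear and quadratic terms, which is exactly how you obtain $\limsup_j\theta_i(x_i^{k_j+1})\le\theta_i(x_i^*)$. From there the routes diverge: the paper combines this with lower semicontinuity to argue that $x_i^*$ minimises the block function $x_i\mapsto h(x_{<i}^*,x_i,x_{>i}^*)+\theta_i(x_i)$ and then invokes the blockwise optimality condition, whereas you use the $\theta_i$-value convergence only to certify $F(x^{k_j+1})\to F(x^*)$, explicitly construct the subgradients $v_i^{k+1}=-\tfrac{1}{\gamma_i^k}(x_i^{k+1}-\hat{x}_i^k)+\nabla_i h(x^{k+1})-\nabla_i h(\hat{x}_i^k,x_{\neq i}^{k,i})$, show they vanish, and close with the closedness of the limiting subdifferential. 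Your version is the standard PALM-style argument and is arguably tighter: the paper's write-up is terse at the end (the displayed bound mixing $h+\theta_i$ with $F(x^*)$ is stated loosely, and "from the optimality condition" is left implicit), while your explicit subgradient bound is also precisely what is needed later as condition (ii) of Lemma \ref{lemma} in the global convergence proof, so nothing is wasted. The one hypothesis you lean on that the paper leaves implicit as well is that $\gamma_i^k$ is bounded away from zero (so that $1/\gamma_i^k$ stays bounded); you flag this correctly, and it is consistent with the constant-parameter regime discussed in the paper's remark.
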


\begin{proof}
Suppose a subsequence $\left\{x^{k_n}\right\}$ of $\left\{x^k\right\}$ converges to $x^* \in \mathcal{X}$. Lemma \ref{bound} implies that $x^{k_n-1} \rightarrow x^*$ and $x^{k_n+1} \rightarrow x^*$. Choosing $x_i=x_i^*$ and $k=k_n$ in \eqref{minopt}, we obtain
\begin{equation} 
\begin{aligned}
  & \langle x_i^*-x_i^{k_n+1}, \nabla_ih({\hat x}_i^{k_n}, x_{\neq i}^{k_n,i})\rangle+\frac{1}{2\gamma_i^{k_n}}\Vert x_i^*-\hat{x}_i^{k_n}\Vert^2 + \theta_i(x_i^*)\\
   & \geq  \frac{1}{2\gamma_i^{k_n}}\Vert x_i^{k_n+1}-\hat{x}_i^{k_n}\Vert^2 + \theta_i(x_i^{k_n+1}).
\end{aligned}
\end{equation}
Since $\nabla_i h$ is Lipschitz continuous, we have
\begin{equation}
 h({x}_{<i}^{k_n+1},{x}_i^{k_n+1}, x_{>i}^{k_n})-h({x}_{<i}^{k_n+1},{x}_i^{*}, x_{>i}^{k_n})-\langle x_i^{k_n+1}-x_i^*,\nabla_i h({x}_{<i}^{k_n+1},{x}_i^{*}, x_{>i}^{k_n})\rangle \leq \frac{L_i^{k_n}}{2}\|x_i^{k_n+1}-x_i^*\|^2. 
\end{equation}
From the definition of $F$, we have 
\begin{equation} 
\limsup_{n\rightarrow \infty}\frac{1}{2\gamma_i^{k_n}}\Vert x_i^{k_n+1}-\hat{x}_i^{k_n}\Vert^2 +h({x}_{<i}^{k_n+1},{x}_i^{k_n+1}, x_{>i}^{k_n})+ \theta_i(x_i^{k_n+1})\leq F(x^*).
\end{equation}
From the low semi-continuous and for $x_i$, we have 
\begin{equation}
    F(x^*)\leq h({x}_{<i}^{*},{x}_i, x_{>i}^{*}) + \theta_i(x_i),
\end{equation}
which means that $x_i^*$ is the minimiser of the problem 
\begin{equation}
   \min_{x_i} h({x}_{<i}^{*},{x}_i, x_{>i}^{*}) + \theta_i(x_i).
\end{equation}
Then from the optimality condition, we complete the proof.
\end{proof}

We demonstrate the global convergence of iBPLM if the objection function in~\eqref{objective} satisfies the Kurdyka–{\L}ojasiewicz property in the following theorem. 
\begin{theorem}\label{conv}
    Suppose that Proposition \ref{NSDP}, Lemma \ref{bound}, and assumption \ref{par} hold. 
    Assume that the  sequence $\left\{x^k\right\}$ generated by the proposed iBPLM Algorithm is bounded. Let $h$ be a continuously differentiable function, $F$ is a KL function and together with the existence of $\underline{l}$ in Lemma \ref{bound}, we also assume there exists $\bar{l}>0$, such that $\max_{i,k}\{\frac{\delta_i^k}{2}\}\leq\bar{l}$. For $C$ in Proposition \ref{NSDP} satisfying $C<\underline{l}/\bar{l}$, the whole generated sequence $\{x^k\}$ of the proposed iBPLM algorithm is convergent. 
\end{theorem}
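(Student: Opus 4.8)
The plan is to follow the now-standard three-ingredient recipe for global convergence under the Kurdyka--\L ojasiewicz property---sufficient decrease, a relative-error subgradient bound, and a KL/finite-length argument---but applied to a Lyapunov surrogate rather than to $F$ directly, since the iterates carry inertial memory. I would first introduce the Lyapunov function
$$\Psi_k := F(x^k) + C\bar{l}\,\|x^k - x^{k-1}\|^2,$$
which augments the objective with the inertial increment weighted by the uniform upper bound $\bar{l}$ on $\delta_i^k/2$. Starting from the descent estimate \eqref{sumF} and using $\delta_i^k/2 \geq \underline{l}$ together with $\delta_i^{k-1}/2 \leq \bar{l}$, one gets $F(x^{k+1}) + \underline{l}\|x^{k+1}-x^k\|^2 \leq F(x^k) + C\bar{l}\|x^k-x^{k-1}\|^2$, and hence
$$\Psi_{k+1} \leq \Psi_k - (\underline{l} - C\bar{l})\,\|x^{k+1}-x^k\|^2.$$
This is precisely where the hypothesis $C < \underline{l}/\bar{l}$ enters: it makes the coefficient $\underline{l} - C\bar{l}$ strictly positive, so $\{\Psi_k\}$ is nonincreasing and enjoys a genuine sufficient-decrease property with positive constant $a := \underline{l} - C\bar{l}$.

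Next I would establish the relative-error (subgradient) bound. From the proximal update defining $x_i^{k+1}$, the first-order optimality condition yields $-\nabla_i h(\hat{x}_i^k, x_{\neq i}^{k,i}) - \frac{1}{\gamma_i^k}(x_i^{k+1}-\hat{x}_i^k) \in \partial\theta_i(x_i^{k+1})$, so that
$$v_i^{k+1} := \nabla_i h(x^{k+1}) - \nabla_i h(\hat{x}_i^k, x_{\neq i}^{k,i}) - \tfrac{1}{\gamma_i^k}(x_i^{k+1}-\hat{x}_i^k) \in \partial_i F(x^{k+1}).$$
Because $\{x^k\}$ is bounded and $h$ is continuously differentiable, $\nabla h$ is Lipschitz on a compact set containing the iterates; combining this with $\hat{x}_i^k = x_i^k + \alpha_i^k(x_i^k-x_i^{k-1})$ and the mixture $x_{\neq i}^{k,i}$ built from $(1-w_{ij})x_j^{k+1}+w_{ij}x_j^k$ bounds each $\|v_i^{k+1}\|$ by a constant multiple of $\|x^{k+1}-x^k\| + \|x^k-x^{k-1}\|$. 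Since $\Psi$ differs from $F$ only by the smooth term $C\bar{l}\|x-y\|^2$, assembling the block subgradients of $\Psi$ (differentiating also in the memory variable) gives
$$\operatorname{dist}(0, \partial\Psi_k) \leq b\left(\|x^{k+1}-x^k\| + \|x^k-x^{k-1}\|\right)$$
for some $b>0$.

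With these two estimates I would deploy the KL machinery. By boundedness the limit-point set $\Omega$ is nonempty and compact; Theorem \ref{sub} identifies its elements as critical points, and the monotonicity of $\{\Psi_k\}$ together with $\|x^{k+1}-x^k\|\to 0$ (from Lemma \ref{bound}) shows $\Psi_k$ converges to a finite value $\Psi^*$ on which $\Psi$ is constant over $\Omega$. Since $F$ is a KL function and the quadratic perturbation preserves this property, $\Psi$ is KL, and I would invoke the uniformized KL property on a neighbourhood of $\Omega$. Writing $\varphi$ for the desingularising function and using its concavity, $\varphi(\Psi_k - \Psi^*) - \varphi(\Psi_{k+1}-\Psi^*) \geq \varphi'(\Psi_k-\Psi^*)(\Psi_k - \Psi_{k+1})$; bounding $\varphi'$ below through the KL inequality and the subgradient estimate, and $\Psi_k-\Psi_{k+1}$ below by $a\|x^{k+1}-x^k\|^2$, yields the key recursion $\|x^{k+1}-x^k\|^2 \leq c\,(\|x^{k+1}-x^k\|+\|x^k-x^{k-1}\|)(\varphi(\Psi_k-\Psi^*)-\varphi(\Psi_{k+1}-\Psi^*))$. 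A Young/AM--GM splitting followed by summation makes the telescoping $\varphi$-terms finite and absorbs the increment terms, giving $\sum_k \|x^{k+1}-x^k\| < +\infty$. The sequence is therefore Cauchy, hence convergent, and its limit is the unique point of $\Omega$, which is critical by Theorem \ref{sub}.

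The main obstacle I anticipate is the relative-error bound, specifically controlling $\|\nabla_i h(x^{k+1}) - \nabla_i h(\hat{x}_i^k, x_{\neq i}^{k,i})\|$: the two arguments differ in every block, both through the inertial extrapolation $\hat{x}_i^k$ and through the Jacobi/Gauss--Seidel mixture encoded by the weight matrix $W$ in $x_{\neq i}^{k,i}$, so one must carefully track how $W$ and the inertial parameters propagate the increments, and one must justify a uniform Lipschitz constant for $\nabla h$ by restricting to the compact hull of the bounded trajectory. The remaining steps---monotonicity of $\Psi$, constancy on $\Omega$, and the finite-length summation---are routine once these estimates and the standard uniformised KL lemma are in place.
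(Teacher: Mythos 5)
Your proposal is correct and follows essentially the same route as the paper: the paper applies the packaged abstract convergence result of Hien et al.\ (Lemma \ref{lemma}) to the Lyapunov surrogate $F^\delta(x,y)=F(x)+\sum_i\frac{\delta_i}{2}\|x_i-y_i\|^2$ with $\delta_i=(\underline{l}+C\bar{l})/2$, verifying sufficient decrease (where $C<\underline{l}/\bar{l}$ enters exactly as in your argument), the subgradient bound via Assumption \ref{par}, the KL property, and a continuity condition, whereas you unroll the same four ingredients by hand with the weight $C\bar{l}$ and an explicit uniformized-KL/telescoping step. The only point you gloss over is the continuity condition $F(x^{k_n})\to F(x^*)$ along convergent subsequences (nontrivial since the $\theta_i$ are merely lower semicontinuous), which the paper verifies using the proximal-step inequality from the proof of Theorem \ref{sub}.
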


\begin{assumption}\label{par}(\cite{phan2023inertial}, Assumption 3)
Let $u_i(x,z)=h(z)+\frac{1}{2\gamma_i^k}\Vert x-z\Vert^2+\langle x-z, \nabla_ih(z) \rangle$,
for any bounded subset of $\mathcal{X}$ and any $x, z$ in this subset, for ${s}_i \in \partial_{x_i} u_i(x, z)$, there exists ${t}_i \in \partial_{x_i} f(x)$ such that
$$
\left\|{s}_i-{t}_i\right\| \leq B_i\|x-z\|
$$
for some constant $B_i$ that may depend on the subset.
\end{assumption}
\begin{lemma}\label{lemma}(\cite{le2020inertial}, Theorem 2) Let $\Phi: \mathbb{R}^N \rightarrow(-\infty,+\infty]$ be a proper and lower semicontinuous function which is bounded from below. Let $\mathcal{A}$ be a generic algorithm which generates a bounded sequence $\left\{z^k\right\}$ by $z^0 \in \mathbb{R}^N, z^{k+1} \in \mathcal{A}\left(z^k\right), k=0,1, \ldots$ Assume that there exist positive constants $\rho_1, \rho_2$ and $\rho_3$ and a non-negative sequence $\left\{\varphi_k\right\}_{k \in \mathbb{N}}$ such that the following conditions are satisfied:
\begin{itemize}
\item[(i)] Sufficient decrease property:
$$
\rho_1\left\|z^k-z^{k+1}\right\|^2 \leq \rho_2 \varphi_k^2 \leq \Phi\left(z^k\right)-\Phi\left(z^{k+1}\right), k=0,1, \ldots
$$ 
\item[(ii)] Boundedness of subgradient:
$$
\left\|\omega^{k+1}\right\| \leq \rho_3 \varphi_k, \omega^k \in \partial \Phi\left(z^k\right) \text { for } k=0,1, \ldots
$$
\item[(iii)] KL property: $\Phi$ is a KL function.
\item[(iv)] A continuity condition: If a subsequence $\left\{z^{k_n}\right\}$ converges to $\bar{z}$ then $\Phi\left(z^{k_n}\right)$ converges to $\Phi(\bar{z})$ as $n$ goes to $\infty$.
\end{itemize}
Then we have $\sum_{k=1}^{\infty} \varphi_k<\infty$, and $\left\{z^k\right\}$ converges to a critical point of $\Phi$.
\end{lemma}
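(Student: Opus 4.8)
The plan is to fit the entire analysis into the abstract machinery of Lemma~\ref{lemma}, applied to the \emph{augmented} state $z^k := (x^k, x^{k-1})$ on $\mathbb{R}^{2n}$ (with $n=\sum_i n_i$) and to the Lyapunov surrogate
\[
\Phi(x,y) := F(x) + \sum_{i=1}^p a_i\|x_i - y_i\|^2,
\]
where the constants $a_i>0$ are to be fixed. I will verify the four hypotheses (i)--(iv) of Lemma~\ref{lemma}; its conclusion $\sum_k \varphi_k<\infty$ together with convergence of $\{z^k\}$ then gives convergence of $\{x^k\}$. Note that $\Phi$ is bounded below since $F$ is, so the standing requirement of Lemma~\ref{lemma} is met.

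The crux is the choice of the weights $a_i$ and the sufficient-decrease condition~(i). Starting from Proposition~\ref{NSDP}, subtracting $\sum_i a_i\|x_i^{k+1}-x_i^k\|^2$ from both sides and regrouping yields
\[
\Phi(z^k) - \Phi(z^{k+1}) \geq \sum_{i=1}^p\left(\frac{\delta_i^k}{2}-a_i\right)\|x_i^{k+1}-x_i^k\|^2 + \sum_{i=1}^p\left(a_i-\frac{\xi_i^k}{2}\right)\|x_i^k-x_i^{k-1}\|^2 .
\]
To make the right-hand side dominate the full augmented step $\|z^{k+1}-z^k\|^2 = \|x^{k+1}-x^k\|^2 + \|x^k-x^{k-1}\|^2$, I need each $a_i$ to lie strictly between $\sup_k \frac{\xi_i^k}{2}$ and $\inf_k \frac{\delta_i^k}{2}$. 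This is exactly where the hypotheses bite: the relation $\xi_i^{k+1}\leq C\delta_i^k$ from Proposition~\ref{NSDP} together with $\frac{\delta_i^k}{2}\leq \bar l$ gives $\sup_k\frac{\xi_i^k}{2}\leq C\bar l$ (the $k=0$ term vanishes since $x_i^{-1}=x_i^0$), while $\frac{\delta_i^k}{2}\geq\underline l$ from Lemma~\ref{bound} gives $\inf_k\frac{\delta_i^k}{2}\geq\underline l$. The assumed inequality $C<\underline l/\bar l$ forces $C\bar l<\underline l$, so the interval $(C\bar l,\underline l)$ is nonempty; choosing any $a_i$ inside it produces a uniform gap $\epsilon>0$ in both brackets. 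Setting $\varphi_k := \|z^{k+1}-z^k\|$ then delivers (i) with $\rho_1=\rho_2=\epsilon$.

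For condition~(ii) I construct an explicit element of $\partial\Phi(z^{k+1})$ and bound its norm. The first-order optimality of the prox step, written as in \eqref{minopt}, gives $\frac{1}{\gamma_i^k}(\hat x_i^k - x_i^{k+1}) - \nabla_i h(\hat x_i^k, x_{\neq i}^{k,i}) \in \partial\theta_i(x_i^{k+1})$; adding $\nabla_i h(x^{k+1})$ recovers an element of $\partial_{x_i}F(x^{k+1})$, and appending the smooth derivatives $\pm 2a_i(x_i^{k+1}-x_i^k)$ of the quadratic yields a subgradient $\omega^{k+1}\in\partial\Phi(z^{k+1})$. Its norm is controlled through the inertial identity $\hat x_i^k = x_i^k + \alpha_i^k(x_i^k-x_i^{k-1})$, the block-coordinate-wise Lipschitz continuity of $\nabla h$ assumed at the outset, and Assumption~\ref{par}, which is precisely tailored to bound the discrepancy between the linearisation point $(\hat x_i^k, x_{\neq i}^{k,i})$ and $x^{k+1}$. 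Unfolding these gives $\|\omega^{k+1}\|\leq \rho_3'(\|x^{k+1}-x^k\|+\|x^k-x^{k-1}\|)\leq \rho_3\varphi_k$, establishing (ii). This is the step I expect to be the main obstacle: reconciling $\nabla_i h$ evaluated at the inertially extrapolated, hybrid Jacobi--Gauss-Seidel argument $(\hat x_i^k, x_{\neq i}^{k,i})$ with $\nabla_i h(x^{k+1})$ requires careful bookkeeping of the weight matrix $W$ and the coefficients $\alpha_i^k$ across blocks.

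The last two conditions are lighter. Condition~(iii) holds because $\Phi$ is the sum of the KL function $F$ (lifted to the product space via the projection onto the first block, which preserves the KL property) and the smooth semialgebraic quadratic $\sum_i a_i\|x_i-y_i\|^2$, and the KL class is stable under such additions. For condition~(iv), along any subsequence $z^{k_n}\to(\bar x,\bar x)$ (the two coordinates share the limit because increments vanish by Lemma~\ref{bound}), continuity of $h$ handles the smooth part, while the prox inequality \eqref{minopt} combined with lower semicontinuity forces $\theta_i(x_i^{k_n})\to\theta_i(\bar x_i)$, exactly as argued in the proof of Theorem~\ref{sub}; hence $\Phi(z^{k_n})\to\Phi(\bar x,\bar x)$. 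With (i)--(iv) verified, Lemma~\ref{lemma} gives $\sum_k\varphi_k=\sum_k\|z^{k+1}-z^k\|<\infty$, so $\{z^k\}$ is Cauchy and converges to a critical point of $\Phi$; projecting onto the first block shows that $\{x^k\}$ converges, which is the assertion of the theorem.
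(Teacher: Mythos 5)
There is a fundamental mismatch between what you proved and what the statement asserts. Lemma~\ref{lemma} is the \emph{abstract} convergence result: for \emph{any} generic algorithm $\mathcal{A}$ producing a bounded sequence $\{z^k\}$ that satisfies the four hypotheses (i) sufficient decrease, (ii) subgradient boundedness, (iii) the KL property of $\Phi$, and (iv) the continuity condition, one has $\sum_{k=1}^{\infty}\varphi_k<\infty$ and convergence of $\{z^k\}$ to a critical point of $\Phi$. Your proposal never establishes this implication; it \emph{invokes} it. Everything you construct --- the augmented state $z^k=(x^k,x^{k-1})$, the Lyapunov surrogate $\Phi(x,y)=F(x)+\sum_i a_i\|x_i-y_i\|^2$, the choice $a_i\in(C\bar{l},\underline{l})$, and the block-by-block verification of (i)--(iv) for the iBPLM iterates via Proposition~\ref{NSDP}, Lemma~\ref{bound} and Assumption~\ref{par} --- is an \emph{application} of Lemma~\ref{lemma} to one particular algorithm. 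In other words, you have reproduced (quite faithfully) the paper's proof of Theorem~\ref{conv}, which uses this lemma as a black box, and your final step ``With (i)--(iv) verified, Lemma~\ref{lemma} gives $\sum_k\varphi_k<\infty$\dots'' assumes exactly the result you were asked to establish. As a proof of the statement in question, the argument is circular.

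A genuine proof of the lemma must run the KL descent mechanism for a \emph{generic} sequence satisfying (i)--(iv): (a) show that the cluster set $\Omega$ of the bounded sequence is nonempty and compact, and that $\Phi$ is finite and constant on $\Omega$ (monotone decrease from (i) plus the continuity condition (iv)); (b) invoke the (uniformized) KL inequality on a neighbourhood of $\Omega$ with desingularising function $\varphi$; (c) combine concavity of $\varphi$ with (i) and (ii) to obtain, for all large $k$, an estimate of the form $\rho_2\varphi_k^2\leq\rho_3\,\varphi_{k-1}\left(\Delta_k-\Delta_{k+1}\right)$ where $\Delta_k:=\varphi\bigl(\Phi(z^k)-\Phi(\bar{z})\bigr)$, then apply AM--GM and telescope to conclude $\sum_k\varphi_k<\infty$; (d) deduce from (i) that $\sum_k\|z^{k+1}-z^k\|<\infty$, so $\{z^k\}$ is Cauchy and converges to some $z^*$; and (e) use (ii), (iv) and the closedness of the limiting subdifferential (Definition~\ref{def2.1}) to conclude $0\in\partial\Phi(z^*)$. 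None of these steps appear in your proposal. For context, the paper itself does not reprove this lemma either --- it imports it verbatim from the cited reference --- so the expected content here is precisely the generic argument sketched above, not the iBPLM-specific verification you supplied.
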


Hence, according to the Assumption \ref{par} and Lemma \ref{lemma}, we prove theorem \ref{conv} as follows. 
\begin{proof}
    Let $x^*$ be a limit point of $\{x^k\}$. From Theorem \ref{sub} we have $x^*$ is a critical point. Define $F^\delta(x, y):=F(x)+\sum_{i=1}^m \frac{\delta_i}{2}\left\|x_i-y_i\right\|^2$. Let $z^k=\left(x^k, x^{k-1}\right)$, $\varphi_k^2=\frac{1}{2}\left\|x^{k+1}-x^k\right\|^2+\frac{1}{2}\left\|x^k-x^{k-1}\right\|^2$. As the generated sequence $\left\{x^k\right\}$ is assumed to be bounded in the following, we verify the conditions of Lemma \ref{lemma} for $F^{\delta}(x^k,x^{k-1})$ with $\delta_i=(\underline{l}+C\bar{l})/2$. 
    \begin{itemize}
        \item[(i)] Sufficient decrease property:

        From \eqref{sumF}, we have
        \begin{equation}
            F(x^{k+1})+
            \underline{l}\Vert x_i^{k+1}-x_i^k\Vert^2
            \leq
            F(x^k)+C\bar{l}\left\|x_i^k-x_i^{k-1}\right\|^2, 
        \end{equation}
        hence, $F^{\delta}(z^k)-F^{\delta}(z^{k+1})\geq(\underline{l}-C\bar{l})\varphi_k^2$.
        \item[(ii)] Boundedness of subgradient:

        Note that
        $$
        \partial_x F^\delta(x, y)=\partial F(x)+\left[\left.\delta_i\left(x_i-y_i\right)\right|_{i=1, \ldots, m}\right],~ {\rm and} ~ \partial_y F^\delta(x, y)=\left[\left.\delta_i\left(y_i-x_i\right)\right|_{i=1, \ldots, m}\right],
        $$
        with the optimality condition we have
        \begin{equation}
            \nabla_ih(x_i^k,\bar{x}_{\neq i}^{k+1})-\nabla_ih(\bar{x}^{k+1,i})+\frac{\alpha_i^k}{\gamma_i^k}(x_i^k-x_i^{k-1})\in\partial_{x_i}(u_i(\bar{x}_i^{k+1},\bar{x}_{\neq i}^{k+1})+\theta_i(\bar{x}_i^{k+1})).
        \end{equation}
        By Assumption \ref{par}, there exist ${s}_i^k \in \partial_{x_i} u_i\left(\bar{x}_i^{k+1},\bar{x}_{\neq i}^{k+1}\right)$ and ${v}_i^k \in \partial \theta_i\left(\bar{x}_i^{k+1}\right)$ such that
        \begin{equation}  
        \nabla_ih(x_i^k,\bar{x}_{\neq i}^{k+1})-\nabla_ih(\bar{x}^{k+1,i})+\frac{\alpha_i^k}{\gamma_i^k}(x_i^k-x_i^{k-1})={s}_i^k+{v}_i^k, 
       \end{equation}  
        and there exists ${t}_i^k \in \partial_{x_i} h\left(x^{k+1}\right)$ such that
        \begin{equation}  
        \left\|{s}_i^k-{t}_i^k\right\| \leq B_i\left\|x^{k+1}-(x_i^k,\bar{x}_{\neq i}^{k+1})\right\| .
        \end{equation}  
        We note that ${t}_i^k+{v}_i^k \in \partial_{x_i} F\left(x^{k+1}\right)$ by Assumption \ref{par}. On the other hand, 
        \begin{equation}   \left\|{t}_i^k+{v}_i^k\right\|=\left\|{t}_i^k-{s}_i^k+{s}_i^k+{v}_i^k\right\| \leq B_i\left\|x^{k+1}-(x_i^k,\bar{x}_{\neq i}^{k+1})\right\|+\frac{2\alpha_i^k}{\gamma_i^k}\Vert x_i^k-x_i^{k-1}\Vert,
        \end{equation}  
        which implies the boundedness of the subgradient.
        \item[(iii)] KL property:

        Since $F$ is a KL function, $F^\delta$ is also a KL function.

        \item[(iv)] A continuity condition:

         Suppose $z^{k_n}\rightarrow z^*$, Lemma \ref{bound} implies that if $x^{k_n} \rightarrow x^*$, then $x^{k_n-1} \rightarrow x^*$. Hence $z^* = (x^*,x^*)$. On the other hand, we know that for $i\in[p]$, $h(x_i^{k_n},\bar{x}_{\neq i}^{k-1})+\theta_i(x_i^{{k_n}})\rightarrow h(x^*)+\theta_i(x_i^*)$. Hence $F(x^{k_n})$ converges to $F(x^*)$, which leads to $F^\delta(z^{{k_n}+1})$ converges to $F^\delta(z^*)$.
    \end{itemize}
    From Lemma \ref{lemma}, the whole generated sequence $\{x^k\}$ of the proposed iBPLM algorithm is convergent. 
\end{proof}

Moreover, if the function $\psi$ appearing in the KL inequality takes the form $\psi(s)=c s^{1-\theta}$ with $\theta \in[0,1)$ and $c>0$, we can derive the convergence rates for both sequences $\{x^k\}$ and $\{F (x^k ) \}$.

\begin{theorem}
Let $\left\{x^k\right\}$ be a sequence generated by iBPLM. Suppose that Assumptions in Theorem \ref{conv} are satisfied. If $\left\{x^k\right\}$ is bounded and the function $\psi$ appearing in the $K L$ inequality takes the form $\psi(s)=c s^{1-\theta}$ with $\theta \in[0,1)$ and $c>0$, then the following statements hold.
(i) If $\theta=0$, the sequences $\left\{x^k\right\}$ and $\left\{F\left(x^k\right)\right\}$ converge in a finite number of steps to $x^*$ and $F^*$, respectively.
(ii) If $\theta \in(0,1 / 2]$, the sequences $\left\{x^k\right\}$ and $\left\{F\left(x^k\right)\right\}$ converge linearly to $x^*$ and $F^*$, respectively.
(iii) If $\theta \in(1 / 2,1)$, there exist positive constants $\delta_1, \delta_2$, and $N$ such that $\left\|x^k-x^*\right\| \leq \delta_1 k^{\frac{1-\theta}{2 \theta-1}}$ and $F\left(x^k\right)-F^* \leq \delta_2 k^{-\frac{1}{2 \theta-1}}$ for all $k \geq N$.
\end{theorem}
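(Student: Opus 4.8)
The plan is to run the standard Attouch--Bolte--Svaiter rate machinery on the Lyapunov (merit) function $F^\delta(x,y):=F(x)+\sum_{i=1}^p\frac{\delta_i}{2}\|x_i-y_i\|^2$ already introduced in the proof of Theorem~\ref{conv}, rather than on $F$ directly, since it is $F^\delta$ (not $F$) that is guaranteed to decrease monotonically. Writing $z^k=(x^k,x^{k-1})$ and $\varphi_k^2=\tfrac12\|x^{k+1}-x^k\|^2+\tfrac12\|x^k-x^{k-1}\|^2$, I would first recollect the two structural estimates verified there: the sufficient-decrease bound $F^\delta(z^k)-F^\delta(z^{k+1})\ge \rho_2\varphi_k^2$ (from \eqref{sumF}) and the subgradient bound $\operatorname{dist}(0,\partial F^\delta(z^k))\le \rho_3\varphi_{k-1}$ (from step (ii) of that proof). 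Setting $E_k:=F^\delta(z^k)-F^*$ with $F^*:=F(x^*)=F^\delta(z^*)$, which is nonnegative and decreases to $0$ by Theorem~\ref{conv}, the KL inequality \eqref{kl} applied to $F^\delta$ with desingularising function $\psi(s)=cs^{1-\theta}$ (so $\psi'(s)=c(1-\theta)s^{-\theta}$) gives $E_k^{\theta}\le c(1-\theta)\operatorname{dist}(0,\partial F^\delta(z^k))$. Chaining the three relations produces the single fundamental recursion
\[
E_k^{2\theta}\le M\,(E_{k-1}-E_k),\qquad M:=\frac{\bigl(c(1-\theta)\rho_3\bigr)^2}{\rho_2},
\]
which I would use to drive all three cases.

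Next I would split on $\theta$. For $\theta=0$ the recursion reads $1\le M(E_{k-1}-E_k)$, so the decrements $E_{k-1}-E_k$ stay bounded below by $1/M$ while $E_k\ge0$; this forces $E_k=0$, hence $\varphi_k=0$, after finitely many steps, giving finite termination of both $\{x^k\}$ and $\{F(x^k)\}$. For $\theta\in(0,1/2]$ we have $2\theta\le1$, so for $k$ large (where $E_k\le1$) $E_k\le E_k^{2\theta}\le M(E_{k-1}-E_k)$, yielding $E_k\le \frac{M}{1+M}E_{k-1}$, i.e. geometric decay of $E_k$. For $\theta\in(1/2,1)$ the exponent $2\theta>1$, and I would invoke the standard real-sequence lemma (see, e.g., \cite{attouch2013convergence}) on monotone sequences obeying $E_k^{2\theta}\le M(E_{k-1}-E_k)$, which yields the polynomial decay $E_k=O\!\bigl(k^{-1/(2\theta-1)}\bigr)$.

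Finally I would transfer these rates from the Lyapunov value $E_k$ to the quantities named in the statement. Since the quadratic penalty is nonnegative and vanishes at $z^*$, $F(x^k)-F^*\le E_k$, so the function-value rates are immediate in each case. For the iterates I would use the tail sums $\Delta_k:=\sum_{j\ge k}\|x^{j+1}-x^j\|$, which are finite by Theorem~\ref{conv} and satisfy $\|x^k-x^*\|\le\Delta_k$; combining $\|x^{k+1}-x^k\|\le\sqrt{2/\rho_2}\,\sqrt{E_k-E_{k+1}}\le\sqrt{2/\rho_2}\,\sqrt{E_k}$ with the geometric (resp.\ polynomial) decay of $E_k$ makes these step lengths summable and bounds $\Delta_k$, giving linear convergence of $\{x^k\}$ in case (ii) and the stated sublinear power of $k$ in case (iii). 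The main obstacle I anticipate is twofold and sits at the very start: first, justifying that the KL desingularising exponent of the \emph{merit} function $F^\delta$ coincides with that of $F$ (addition of the separable quadratic term $\sum_i\frac{\delta_i}{2}\|x_i-y_i\|^2$ preserves the exponent, which one should argue via the KL exponent calculus rather than assume); and second, handling cleanly the fact that $\varphi_k$ couples two consecutive differences, so that the subgradient bound is naturally expressed through $\varphi_{k-1}$ while the decrease is through $\varphi_k$, and the passage to the clean single-variable recursion in $E_k$ must absorb this index shift without degrading the exponents.
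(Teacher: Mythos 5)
The paper gives no proof of this theorem at all: it merely remarks that the result follows ``by using the same techniques as those in the proofs of Attouch and Bolte'' and omits the details. Your proposal is exactly that standard machinery, run on the merit function $F^\delta$ from the proof of Theorem~\ref{conv}, so you are doing what the paper intends. Your derivation of the recursion $E_k^{2\theta}\le M\,(E_{k-1}-E_k)$, your handling of cases (i) and (ii), and the function-value rates in all three cases are correct, and you are right to flag the transfer of the KL exponent from $F$ to $F^\delta$ as something that must be argued (the paper silently assumes it; it does hold by the standard KL-exponent calculus for a sum with a separable quadratic).

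The one genuine gap is the iterate bound in case (iii). You control $\|x^k-x^*\|\le\Delta_k=\sum_{j\ge k}\|x^{j+1}-x^j\|$ by summing $\|x^{j+1}-x^j\|\le\sqrt{2/\rho_2}\,\sqrt{E_j}$. With $E_j=O\bigl(j^{-1/(2\theta-1)}\bigr)$ this gives $\sqrt{E_j}=O\bigl(j^{-1/(2(2\theta-1))}\bigr)$, which is summable only when $\tfrac{1}{2(2\theta-1)}>1$, i.e. $\theta<3/4$; for $\theta\in[3/4,1)$ your estimate does not even show the tail sum is finite, let alone recover the rate $k^{-(1-\theta)/(2\theta-1)}$ (and summing $\sqrt{E_j-E_{j+1}}$ instead does not telescope). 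The standard repair, which is the actual content of the Attouch--Bolte--Svaiter argument, is the concavity trick: combining sufficient decrease, the subgradient bound and concavity of $\psi$ yields $2\|x^{j+1}-x^j\|\le\|x^{j}-x^{j-1}\|+C\bigl(\psi(E_j)-\psi(E_{j+1})\bigr)$, which telescopes to $\Delta_k\le\|x^k-x^{k-1}\|+C\,\psi(E_k)$; since $\psi(E_k)=cE_k^{1-\theta}=O\bigl(k^{-(1-\theta)/(2\theta-1)}\bigr)$, this gives the stated iterate rate for every $\theta\in(1/2,1)$. (Incidentally, the exponent in item (iii) of the theorem should read $k^{-\frac{1-\theta}{2\theta-1}}$; as printed it is positive.)
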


Because this theorem can be proven by using the same techniques as those in the proofs of Attouch and Bolte \cite[theorem 2]{attouch2009convergence}, we omit the detail of the proof.
 
\subsection{Plug-and-Play iBPLM Algorithm}\label{pnpalg}
In this section, we propose the following plug-and-play inertial block proximal linearised minimisation (PnP-iBPLM) algorithm~\ref{pnppalm} to solve the non-convex optimisation problem (\ref{objective}). Our approach extends the iBPLM algorithm by integrating deep priors as regularisers, enhancing the robustness of the optimisation process. 

\begin{algorithm}[t!]
\caption{Plug-and-play iBPLM algorithm (PnP-iBPLM)}\label{pnppalm}
\begin{algorithmic}
\STATE{{\bf Initialisation:} Select $w_{ij}\in[0,1]$ for $j<i$ and  $w_{ij}=1$ for $j\geq i$. Given $x_i^0$, set $x_i^{-1}=x_i^{0}$.
}
\FOR{$k=1, 2, 3, \ldots$}
\STATE{Update $x_j^{k,i}= (1-w_{ij})x_j^{k+1}+w_{ij}x_j^k,~~ \forall i,j=1,2,\ldots,p$.}
\FOR{$i=1, 2, 3, \ldots, p$}
\STATE{Choose $\alpha_i^k\in[0,1)$.}
\STATE{Update $\hat{x}_i^k=x_i^k+\alpha_i^k(x_i^k-x_i^{k-1})$;}
\STATE{Update $x_i^{k+1}=\mathcal{D}_{\sigma_i}(\hat{x}_i^{k}-\gamma_i^k \nabla_i h({\hat x}_i^k, x_{\neq i}^{k,i}))$.}
\ENDFOR
\IF{stopping criterion is satisfied,}
\STATE{
Return $(x_1^k, x_2^k, \ldots, x_p^k)$.
}
\ENDIF
\ENDFOR
\end{algorithmic}
\end{algorithm}

We update the $x_i~(i=1,2,\ldots,p)$-subproblems using the gradient step Denoiser \cite{cohen2021has,wu2024extrapolated}, defined by
\begin{equation}\label{denoiser}
\mathcal{D}_{\sigma_i}=I-\nabla g_{\sigma_i}, i=1,2,\ldots,p, 
\end{equation}
which is obtained from a scalar function
\begin{equation}
g_{\sigma_i}=\frac{1}{2}\left\|{x}-N_{\sigma_i}({x})\right\|^2,
\end{equation}
where the mapping $N_{\sigma_i}({x})$ is implemented using a differentiable neural network. This enables the explicit computation of $g_{\sigma_i}$ and ensures that $g_{\sigma_i}$ has a Lipschitz gradient with a constant $L$ (where $L<1$).
Originally, the denoiser $\mathcal{D}_{\sigma_i}$, described in \eqref{denoiser}, is trained to denoise images degraded by Gaussian noise of level $\sigma_i$. Notably, the denoiser $\mathcal{D}_{\sigma_i}$ takes the form of a proximal mapping of a weakly convex function, as detailed in the next proposition.

\begin{proposition}[\cite{wu2024extrapolated}, Proposition 4.1]
$\mathcal{D}_{\sigma_i}({x})=\operatorname{prox}_{\phi_{\sigma_i}}({x})$, where $\phi_{\sigma_i}$ is defined by
\begin{equation}\label{phi_sigma}
\phi_{\sigma_i}({x})=g_{\sigma_i}\left(\mathcal{D}_{\sigma_i}^{-1}({x})\right)-\frac{1}{2}\left\|\mathcal{D}_{\sigma_i}^{-1}({x})-{x}\right\|^2, i=1,2,\ldots, p,
\end{equation}
if ${x} \in \operatorname{Im}\left(\mathcal{D}_{\sigma_i}\right)$, and $\phi_{\sigma_i}({x})=+\infty$ otherwise. Moreover, $\phi_{\sigma_i}$ is $\frac{L}{L+1}$-weakly convex and $\nabla \phi_{\sigma_i}$ is $\frac{L}{1-L}$-Lipschitz on $\operatorname{Im}\left(\mathcal{D}_{\sigma_i}\right)$, and $\phi_{\sigma_i}({x}) \geq g_{\sigma_i}({x}), \forall {x} \in \mathbb{R}^n$.
\end{proposition}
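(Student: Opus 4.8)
The plan is to exploit the gradient-step structure of the denoiser: writing $\mathcal{D}_{\sigma_i}=I-\nabla g_{\sigma_i}=\nabla h_{\sigma_i}$ with $h_{\sigma_i}(x):=\frac{1}{2}\|x\|^2-g_{\sigma_i}(x)$, so that $\mathcal{D}_{\sigma_i}$ is the gradient of a smooth function. First I would record the spectral consequences of the hypothesis that $\nabla g_{\sigma_i}$ is $L$-Lipschitz with $L<1$: assuming $N_{\sigma_i}$ (hence $g_{\sigma_i}$) is $C^2$, this gives $-LI\preceq \nabla^2 g_{\sigma_i}\preceq LI$, whence $\nabla^2 h_{\sigma_i}=I-\nabla^2 g_{\sigma_i}\succeq(1-L)I\succ0$. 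Thus $h_{\sigma_i}$ is $(1-L)$-strongly convex and $(1+L)$-smooth on $\mathbb{R}^n$, so $\mathcal{D}_{\sigma_i}=\nabla h_{\sigma_i}$ is a bijection of $\mathbb{R}^n$ whose inverse is single-valued and $C^1$ on $\operatorname{Im}(\mathcal{D}_{\sigma_i})$ (indeed $\operatorname{Im}(\mathcal{D}_{\sigma_i})=\mathbb{R}^n$ here, so the $+\infty$ branch of $\phi_{\sigma_i}$ is only a formal safeguard). In particular $\phi_{\sigma_i}$ is well defined by its formula.

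Next I would verify the proximal identity $\mathcal{D}_{\sigma_i}=\operatorname{prox}_{\phi_{\sigma_i}}$ by a first-order argument, the key computation being $\nabla\phi_{\sigma_i}(y)=\nabla g_{\sigma_i}(\mathcal{D}_{\sigma_i}^{-1}(y))$. I would obtain this from the formula for $\phi_{\sigma_i}$ by the chain rule together with two facts: the Jacobian of $\mathcal{D}_{\sigma_i}^{-1}$ equals $(I-\nabla^2 g_{\sigma_i})^{-1}$ and is symmetric (inverse function theorem), and $\mathcal{D}_{\sigma_i}^{-1}(y)-y=\nabla g_{\sigma_i}(\mathcal{D}_{\sigma_i}^{-1}(y))$ (rearranging $\mathcal{D}_{\sigma_i}=I-\nabla g_{\sigma_i}$); the Jacobian-dependent terms then cancel. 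Evaluating at $y=\mathcal{D}_{\sigma_i}(x)$ gives $\nabla\phi_{\sigma_i}(\mathcal{D}_{\sigma_i}(x))+(\mathcal{D}_{\sigma_i}(x)-x)=\nabla g_{\sigma_i}(x)+(x-\nabla g_{\sigma_i}(x))-x=0$, so $\mathcal{D}_{\sigma_i}(x)$ is a stationary point of $y\mapsto\phi_{\sigma_i}(y)+\frac{1}{2}\|y-x\|^2$. To promote this to the global minimiser, I would show this objective is strongly convex: its Hessian is $\nabla^2\phi_{\sigma_i}(y)+I$, and since (see below) $\nabla^2\phi_{\sigma_i}\succeq-\frac{L}{1+L}I$, the objective's Hessian is $\succeq\frac{1}{1+L}I\succ0$. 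Uniqueness of the minimiser then yields $\operatorname{prox}_{\phi_{\sigma_i}}(x)=\mathcal{D}_{\sigma_i}(x)$.

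For the regularity constants I would diagonalise. From $\nabla\phi_{\sigma_i}=\nabla g_{\sigma_i}\circ\mathcal{D}_{\sigma_i}^{-1}$ one gets $\nabla^2\phi_{\sigma_i}(y)=\nabla^2 g_{\sigma_i}(u)\,(I-\nabla^2 g_{\sigma_i}(u))^{-1}$ with $u=\mathcal{D}_{\sigma_i}^{-1}(y)$; since $\nabla^2 g_{\sigma_i}(u)$ is symmetric with eigenvalues $\mu\in[-L,L]$, the matching eigenvalue of $\nabla^2\phi_{\sigma_i}(y)$ is $\mu/(1-\mu)$, which is increasing in $\mu$ and ranges over $[-\frac{L}{1+L},\frac{L}{1-L}]$. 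The lower bound $-\frac{L}{L+1}$ is exactly $\frac{L}{L+1}$-weak convexity, and the bound $|\mu/(1-\mu)|\le \frac{L}{1-L}$ gives that $\nabla\phi_{\sigma_i}$ is $\frac{L}{1-L}$-Lipschitz.

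Finally, for $\phi_{\sigma_i}\ge g_{\sigma_i}$ I would use the Moreau envelope $M(x):=\min_y\{\phi_{\sigma_i}(y)+\frac{1}{2}\|y-x\|^2\}$. Since the minimiser is $y=\mathcal{D}_{\sigma_i}(x)$, substituting the definition of $\phi_{\sigma_i}$ and simplifying the two squared terms gives $M(x)=g_{\sigma_i}(x)$ for every $x\in\mathbb{R}^n$. The trivial bound $M(x)\le\phi_{\sigma_i}(x)$ (taking $y=x$ in the minimisation) then yields $\phi_{\sigma_i}(x)\ge g_{\sigma_i}(x)$ for all $x$, with the off-image case holding vacuously. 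The main obstacle is the rigorous handling of $\mathcal{D}_{\sigma_i}^{-1}$: establishing that it is globally single-valued and continuously differentiable, and carrying the inverse-function-theorem bookkeeping cleanly enough that the symmetric-Jacobian cancellation and the exact constants $\frac{L}{L+1}$ and $\frac{L}{1-L}$ emerge; the remaining steps are routine convex-analytic verifications.
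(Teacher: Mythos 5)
Your proof is correct, but it is not the paper's proof, because the paper offers none: this proposition is imported verbatim by citation (Proposition 4.1 of \cite{wu2024extrapolated}, which in turn follows the gradient-step denoiser analysis of \cite{hurault2022gradient}). The proof in that cited line of work runs through convex duality: since $L<1$, the potential $h_{\sigma_i}=\frac{1}{2}\|\cdot\|^2-g_{\sigma_i}$ is (strongly) convex, so $\mathcal{D}_{\sigma_i}=\nabla h_{\sigma_i}$ falls under the Gribonval--Nikolova characterization of proximity operators, which gives $\mathcal{D}_{\sigma_i}=\operatorname{prox}_{\phi_{\sigma_i}}$ with $\phi_{\sigma_i}=h_{\sigma_i}^*-\frac{1}{2}\|\cdot\|^2$; the explicit formula \eqref{phi_sigma} on $\operatorname{Im}(\mathcal{D}_{\sigma_i})$ then drops out of the Legendre identity $h^*(\nabla h(u))=\langle u,\nabla h(u)\rangle-h(u)$, and the weak-convexity and Lipschitz constants are extracted from properties of the conjugate. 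You replace the appeal to that characterization theorem by a direct, self-contained verification: the identity $\nabla\phi_{\sigma_i}=\nabla g_{\sigma_i}\circ\mathcal{D}_{\sigma_i}^{-1}$ (where, incidentally, the cancellation does not require symmetry of the Jacobian of $\mathcal{D}_{\sigma_i}^{-1}$ --- the transposed-Jacobian terms cancel identically once you substitute $\mathcal{D}_{\sigma_i}^{-1}(y)-y=\nabla g_{\sigma_i}(\mathcal{D}_{\sigma_i}^{-1}(y))$), stationarity plus strong convexity of $y\mapsto\phi_{\sigma_i}(y)+\frac{1}{2}\|y-x\|^2$, the eigenvalue map $\mu\mapsto\mu/(1-\mu)$ on $[-L,L]$ for the two constants, and the Moreau-envelope identity (the envelope of $\phi_{\sigma_i}$ equals $g_{\sigma_i}$ pointwise) for the bound $\phi_{\sigma_i}\geq g_{\sigma_i}$; all of these steps check out. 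The trade-off between the two routes: yours is elementary and avoids quoting an external characterization theorem, but it leans on $g_{\sigma_i}$ being $C^2$ (for the Hessian computations and the inverse function theorem) and on surjectivity of $\mathcal{D}_{\sigma_i}$ --- both legitimate here, since the paper's hypotheses include $C^2$ regularity and global $L$-Lipschitzness of $\nabla g_{\sigma_i}$ with $L<1$, which indeed force $\operatorname{Im}(\mathcal{D}_{\sigma_i})=\mathbb{R}^n$ as you note; the duality proof, by contrast, needs only $C^1$ data and remains valid when the image is a proper subset of $\mathbb{R}^n$, which is precisely why the $+\infty$ branch appears in the statement.
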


In the following, we present the convergence results of PnP-iBPLM Algorithm.
\begin{theorem}
Let $g_{\sigma_i}: \mathbb{R}^n \rightarrow \mathbb{R} \cup\{+\infty\}$ of class $\mathcal{C}^2$ with L-Lipschitz continuous gradient with $L<1$, and $\mathcal{D}_{\sigma_i}=I-\nabla g_{\sigma_i}$ and $h$ is a lower semi-continuous function. Suppose that $h$ and $g_{\sigma_i}$ are bounded from below, then the sequence $\{x^{k}\}$ generated by the proposed PnP-iBPLM Algorithm, which is assumed to be bounded. Then,
\begin{itemize}
    \item[(i)] for positive parameters $\xi_i^k$ and $\delta_i^k$ satisfy $\xi_i^{k+1}\leq C \delta_i^k$ for some constant $C\in(0,1)$, we know that with $\theta_i:=\phi_{\sigma_i}$, the following holds
    \begin{equation}
F(x^k)+\sum_{i=1}^p\frac{\xi_i^k}{2}\left\|x_i^k-x_i^{k-1}\right\|^2
    \geq
    F({x}^{k+1})+
    \sum_{i=1}^p\frac{\delta_i^k}{2}\Vert x_i^{k+1}-x_i^k\Vert^2, k=0,1, \cdots.
\end{equation}
\item[(ii)] if there exists a positive parameter $\underline{l}$ such that $\min_{i,k}\{\frac{\delta_i^k}{2}\}\geq\underline{l}$, then we have
        \begin{equation}
            \sum_{k=0}^{+\infty} \sum_{i=1}^m\left\|x_i^{k+1}-x_i^k\right\|^2<+\infty.
        \end{equation}
\item[(iii)] every limit point $x^*$ of $\left\{x^k\right\}$ is a critical point. 
\item[(iv)] suppose assumption \ref{par} hold.  Let $F$ be a KL function and together with the existence of $\underline{l}$ in Lemma \ref{bound}, we also assume there exists $\bar{l}>0$, such that $\max_{i,k}\{\frac{\delta_i^k}{2}\}\leq\bar{l}$. For $C$ in Proposition \ref{NSDP} satisfying $C<\underline{l}/\bar{l}$, the whole generated sequence $\{x^k\}$ of the proposed PnP-iBPLM algorithm is convergent. 
\end{itemize}
\end{theorem}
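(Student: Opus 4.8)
The plan is to recognise that, once the regulariser is identified correctly, PnP-iBPLM \emph{is} iBPLM, and then to port the four results already established for iBPLM. The bridge is the cited proposition (\cite{wu2024extrapolated}, Proposition 4.1): the gradient-step denoiser satisfies $\mathcal{D}_{\sigma_i}=\operatorname{prox}_{\phi_{\sigma_i}}$ with $\phi_{\sigma_i}$ proper, lower semicontinuous and $\frac{L}{L+1}$-weakly convex, and with $\nabla\phi_{\sigma_i}$ being $\frac{L}{1-L}$-Lipschitz on $\operatorname{Im}(\mathcal{D}_{\sigma_i})$; moreover $\phi_{\sigma_i}\ge g_{\sigma_i}$. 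Setting $\theta_i:=\phi_{\sigma_i}$ therefore turns the denoiser update of Algorithm~\ref{pnppalm} into an exact proximal update of the type used in Algorithm~\ref{algo1}, and the weak-convexity modulus $\frac{L}{L+1}<1$ guarantees that this proximal map is single-valued, so the iterate is well defined. The two genuinely new verifications are that $F=\sum_i\phi_{\sigma_i}+h$ is proper, lower semicontinuous and bounded below — the last point following from $\phi_{\sigma_i}\ge g_{\sigma_i}$ together with the assumed lower boundedness of $g_{\sigma_i}$ and $h$ — and that the iBPLM convergence proofs never invoked convexity of the regulariser.

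For claim~(i) I would transcribe the computation of Proposition~\ref{NSDP} essentially verbatim with $\theta_i$ replaced by $\phi_{\sigma_i}$. The key structural remark is that the chain \eqref{minopt}--\eqref{add4} uses only the minimiser characterisation of the proximal step and the block Lipschitz descent inequality \eqref{add2} for $h$; convexity of $\theta_i$ is nowhere used. The one piece of bookkeeping that deserves care is reconciling the unit proximal weight carried by $\mathcal{D}_{\sigma_i}$ with the gradient step size $\gamma_i^k$, so that the gradient terms produced by the proximal step and by \eqref{add2} collapse into the controllable gradient difference of \eqref{add4}; this is what fixes the admissible form of the constants $\xi_i^k,\delta_i^k$ and keeps the relation $\xi_i^{k+1}\le C\delta_i^k$ attainable. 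Granting~(i), claims~(ii) and~(iii) are immediate copies of Lemma~\ref{bound} and Theorem~\ref{sub}: the telescoping summability argument uses only sufficient decrease together with $F$ bounded below, while the limit-point argument passes to the limit in the minimiser inequality using lower semicontinuity of $\phi_{\sigma_i}$ and continuity of $h$ to conclude $0\in\partial F(x^*)$.

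Claim~(iv) is where the specific structure of $\phi_{\sigma_i}$ does real work, and I expect the boundedness-of-subgradient condition to be the main obstacle. Following the proof of Theorem~\ref{conv}, I would set $F^\delta(x,y)=F(x)+\sum_i\frac{\delta_i}{2}\|x_i-y_i\|^2$ and $z^k=(x^k,x^{k-1})$, then check the four hypotheses of Lemma~\ref{lemma}. Sufficient decrease and the continuity condition transfer directly from~(i)--(iii), and the KL hypothesis is assumed on $F$, hence inherited by $F^\delta$. The delicate point is bounding a subgradient of $F$ at $x^{k+1}$ by the iterate increments: I would combine the optimality condition of the proximal step with Assumption~\ref{par} for the surrogate $u_i$, and then use that each iterate $x_i^{k+1}$ lies in $\operatorname{Im}(\mathcal{D}_{\sigma_i})$, where $\phi_{\sigma_i}$ is differentiable, so that $\partial\phi_{\sigma_i}(x_i^{k+1})=\{\nabla\phi_{\sigma_i}(x_i^{k+1})\}$ is single-valued and, by its $\frac{L}{1-L}$-Lipschitz gradient, controllable in terms of the increments. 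Assembling these bounds and invoking Lemma~\ref{lemma} under $C<\underline{l}/\bar{l}$ then yields $\sum_k\|x^{k+1}-x^k\|<\infty$ and convergence of the whole sequence $\{x^k\}$ to a critical point of $F$, exactly as in the iBPLM case.
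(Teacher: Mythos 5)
Your proposal takes essentially the same route as the paper: the paper's proof is precisely the reduction you describe, identifying the denoiser step as $\operatorname{prox}_{\phi_{\sigma_i}}$ so that PnP-iBPLM becomes iBPLM with $\theta_i:=\phi_{\sigma_i}$, and then invoking Proposition~\ref{NSDP}, Lemma~\ref{bound}, Theorem~\ref{sub} and Theorem~\ref{conv} for (i)--(iv) respectively. Your write-up is in fact more careful than the paper's (which states the reduction in two sentences), notably on the lower-boundedness of $F$ via $\phi_{\sigma_i}\ge g_{\sigma_i}$, the proximal-weight bookkeeping, and the subgradient bound on $\operatorname{Im}(\mathcal{D}_{\sigma_i})$.
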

\begin{proof}
    The PnP-iBPLM algorithm is a special case of the problem in~\eqref{objective} with  $\theta_i=\phi_{\sigma_i}$ with given assumptions. Therefore, it follows from Proposition \ref{NSDP}, (i) holds; from Lemma \ref{bound} that (ii) holds;  and from  Theorem \ref{sub} that (iii) is confirmed.
    Conclusion (iv) can be derived from Lemma \ref{lemma} and Theorem \ref{conv}. This completes the proof. 
\end{proof}

\begin{remark}
Note that in the implementation, we can select part of the subproblems solved by the gradient-step denoiser, leading to hybrid methods.
Furthermore, according to \cite[Lemma 1]{hurault2023convergent}, $\phi_{\sigma_i}({x})$ in \eqref{phi_sigma} satisfies the Kurdyka-{\L}ojasiewicz (KL) property if $g_{\sigma_i}$ is real analytic \cite{krantz2002primer} in a neighborhood of $ x\in\mathbb{R}^n$ and its Jacobian matrix $Jg_{\sigma_i} ({ x})$ is nonsingular. The analyic nature of  $g_{\sigma_i}$ can be assured for a wide range of deep neural networks.
Additionally, the nonsingularity of $Jg_{\sigma_i} ({x})$ can be guaranteed by assuming $L<1$ as discussed in \cite{hurault2023convergent}. For more discussions on general conditions under which the KL property holds for deep neural networks, we refer to \cite{castera2021inertial, zeng2019global}. Therefore, selecting a neural network for $g_{\sigma_i}$ that guarantees the KL property of $\phi_{\sigma_i}({x})$ during implementation is attainable.
\end{remark}

\section{Experimental Results}\label{results}
We validate our proposed iBPLM and PnP-iBPLM algorithms on two inverse problem tasks: image denoising and image restoration. We test the algorithms using two blocks in the image denoising task and four blocks in the image restoration task, \textit{both employing dictionary learning-based models}. Following standard protocol, we use two widely recognised metrics: Peak Signal to Noise Ratio (PSNR) and Structural Similarity Index (SSIM). All experiments in this section were run using PyTorch on an NVIDIA RTX A6000 GPU. 

\begin{wraptable}{r}{0.6\textwidth}
\vspace{-0.7cm}
\caption{Denoising results of the proposed framework with $\tau_X=0.5$, and K-SVD for noise levels $\sigma = [25, 50, 75, 100]$.}
\begin{center}
\resizebox{0.6\textwidth}{!}{
\begin{tabular}{ccccccc}
    \hline
    \cellcolor[HTML]{EFEFEF}\textsc{Noise}&\cellcolor[HTML]{EFEFEF}\textsc{Metric}&\cellcolor[HTML]{EFEFEF}\textsc{Degraded} & \cellcolor[HTML]{EFEFEF}K-SVD \cite{rubinstein2012analysis} & \cellcolor[HTML]{EFEFEF}\textcolor{blue}{\FiveStar}iBPLM \\ \hline
    \multirow{2}{*}{$\sigma=25$}&PSNR&20.18&25.58&\cellcolor[HTML]{D7FFD7}25.61\\  
                                &SSIM&82.32&94.08&\cellcolor[HTML]{D7FFD7}94.60\\ \hline
    \multirow{2}{*}{$\sigma=50$}&PSNR&14.16 &21.27&\cellcolor[HTML]{D7FFD7}22.91\\
                                &SSIM&59.75&86.64&\cellcolor[HTML]{D7FFD7}89.73\\ \hline
    \multirow{2}{*}{$\sigma=75$}&PSNR&10.64&18.79&\cellcolor[HTML]{D7FFD7}20.17\\
                                &SSIM&42.64&79.31&\cellcolor[HTML]{D7FFD7}82.20\\ \hline
    \multirow{2}{*}{$\sigma=100$}&PSNR&8.14&17.38&\cellcolor[HTML]{D7FFD7}17.92\\
                                &SSIM&30.84&73.61&\cellcolor[HTML]{D7FFD7}73.78\\                        
    \hline
\end{tabular}
}
\vspace{-0.7cm}

\end{center}
\label{tab:tau}
\end{wraptable}

\subsection{iBPLM for Image Denoising}
For the specific inverse problem \eqref{dic}, following the proposed iBPLM algorithm, we set \( h(D, X) = \frac{1}{2}\|DX - Y\|^2 \), \( \theta_X(X) = \lambda_X \Vert X\Vert_0 \), and \( \theta_D(D) = \lambda_D \Vert D\Vert^2\). This definition is  consistent with the traditional dictionary learning, enabling the dictionary learning denoising model to be solved using our iBPLM algorithm. 
We start by comparing our technique against that of K-SVD \cite{rubinstein2012analysis}.

Table~\ref{tab:tau} provides a comparative analysis of denoising performance between the well-established K-SVD method and our proposed iBPLM framework (with $\tau_X = 0.5$) across a range of noise levels. This analysis underscores the iBPLM framework’s superior performance in enhancing both PSNR and SSIM metrics across all tested scenarios. 
At the lower noise level of $\sigma=25$, the iBPLM not only improves upon the `Degraded' baseline but also edges out K-SVD in SSIM, showcasing its acute capability to preserve image structural integrity even in subtler noise environments. As noise levels escalate, the resilience of iBPLM becomes even more apparent; it consistently records higher SSIM values than K-SVD, which points to its robust ability to maintain visual quality and textural details under more severe noise conditions. This trend holds true even at the very high noise setting of  $\sigma=100$, where iBPLM continues to deliver superior structural preservation as indicated by its higher SSIM scores. The consistent outperformance of iBPLM across various noise intensities highlights its potential as a particularly effective tool for applications demanding high fidelity image restoration. 

\begin{figure}[t!]
\hspace{-0.13in}
 \subfigure[\footnotesize{\small Original}]{
	\zoomincludgraphic{0.224\textwidth}{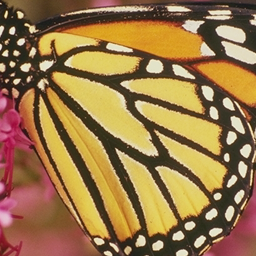}{0.14}{0.42}{0.24}{0.32}{4}{help_grid_off}{up_right}{line_connection_off}{2.5}{green}{1.}{green}
	} \hspace{-0.255in}
    \subfigure[\footnotesize{\small Observed (20.18/82.32)}]{
	\zoomincludgraphic{0.224\textwidth}{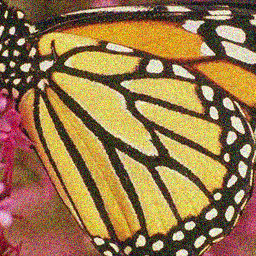}{0.14}{0.42}{0.24}{0.32}{4}{help_grid_off}{up_right}{line_connection_off}{2.5}{green}{1.}{green}
	}\hspace{-0.22in}
    \subfigure[\footnotesize{\small K-SVD (25.58/94.08)}]{
	\zoomincludgraphic{0.224\textwidth}{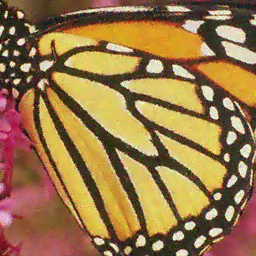}{0.14}{0.42}{0.24}{0.32}{4}{help_grid_off}{up_right}{line_connection_off}{2.5}{green}{1.}{green}
	}\hspace{-0.22in}
    \subfigure[\footnotesize{\small iBPLM (25.63/94.61)}]{
	\zoomincludgraphic{0.224\textwidth}{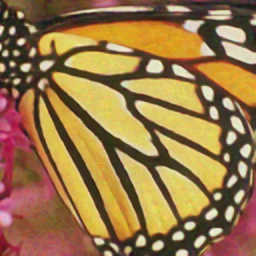}{0.14}{0.42}{0.24}{0.32}{4}{help_grid_off}{up_right}{line_connection_off}{2.5}{green}{1.}{green}
	}\hspace{-0.22in}

\vspace{-0.1in}
\hspace{-0.13in}
 \subfigure[\footnotesize{\small Original}]{
	\zoomincludgraphic{0.224\textwidth}{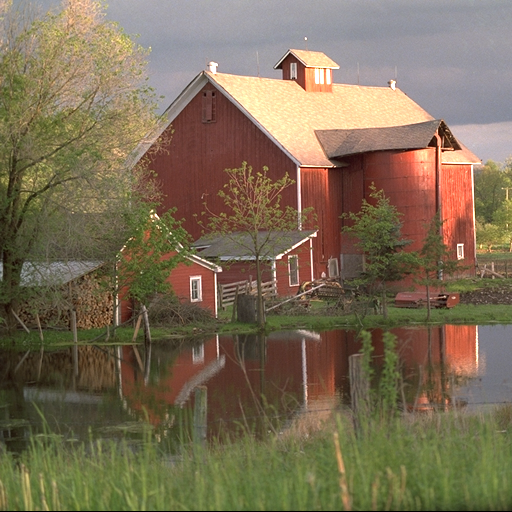}{0.43}{0.385}{0.51}{0.455}{4.5}{help_grid_off}{bottom_right}{line_connection_off}{2.5}{green}{1}{green}	}\hspace{-0.22in}
 \subfigure[\footnotesize{\small Observed (20.17/55.56)}]{
	\zoomincludgraphic{0.224\textwidth}{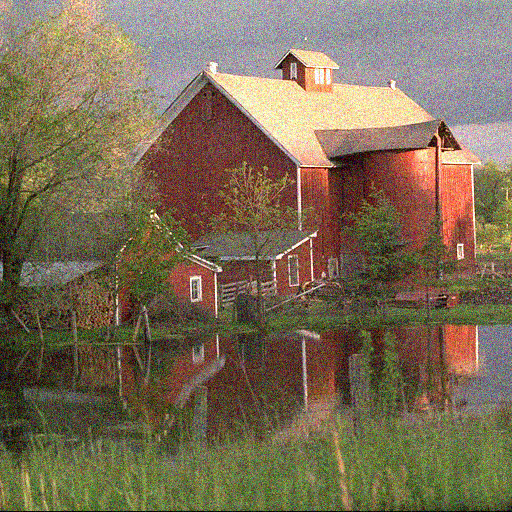}{0.43}{0.385}{0.51}{0.455}{4.5}{help_grid_off}{bottom_right}{line_connection_off}{2.5}{green}{1}{green} 
	}\hspace{-0.22in}
  \subfigure[\footnotesize{\small K-SVD (26.71/84.76)}]{
	\zoomincludgraphic{0.224\textwidth}{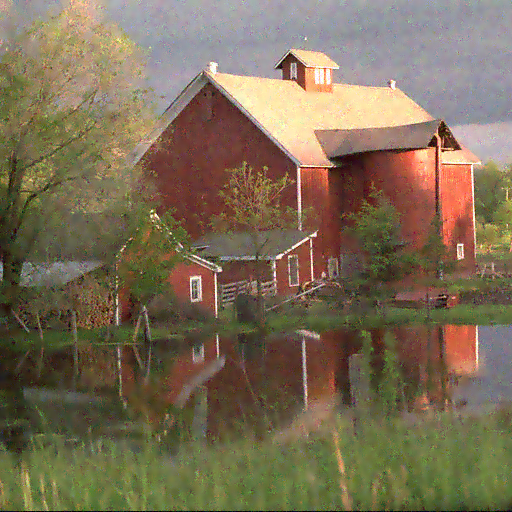}{0.43}{0.385}{0.51}{0.455}{4.5}{help_grid_off}{bottom_right}{line_connection_off}{2.5}{green}{1}{green}  
	}\hspace{-0.22in}
   \subfigure[\footnotesize{\small iBPLM (27.31/85.86)}]{
	\zoomincludgraphic{0.224\textwidth}{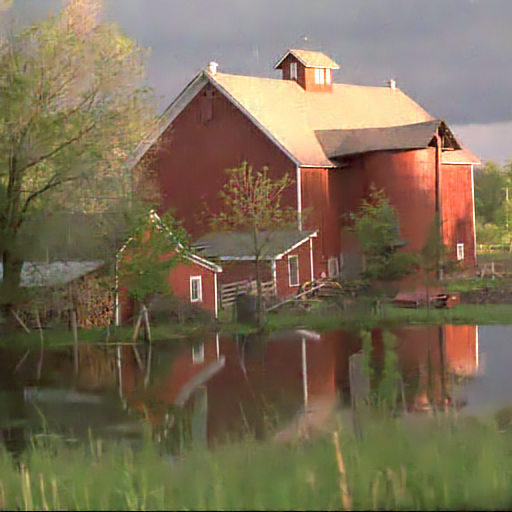}{0.43}{0.385}{0.51}{0.455}{4.5}{help_grid_off}{bottom_right}{line_connection_off}{2.5}{green}{1}{green} 
	}\hspace{-0.22in}
 \caption{Image restoration results (PSNR/SSIM) with Gaussian noise level $25$. 
Visualisation comparing our technique with K-SVD \cite{rubinstein2012analysis}, with two examples presented in (a)-(d) and (e)-(h), respectively.
 }\label{fig: denoise}
 \end{figure}

The visual results in Figure \ref{fig: denoise} further support our iBPLM framework in preserving detail and enhancing image quality compared to K-SVD. For the butterfly image, iBPLM restores intricate wing patterns with greater clarity and less noise, as indicated by the higher PSNR and SSIM scores. In the house image, iBPLM demonstrates its strength in rendering architectural details and textures more sharply, particularly in areas such as the roof and surrounding foliage. These images visually support the numerical findings, showing iBPLM's superior performance in reducing noise while maintaining the integrity of the original images.

\begin{figure}[t]
\centerline{\includegraphics[width=6in]{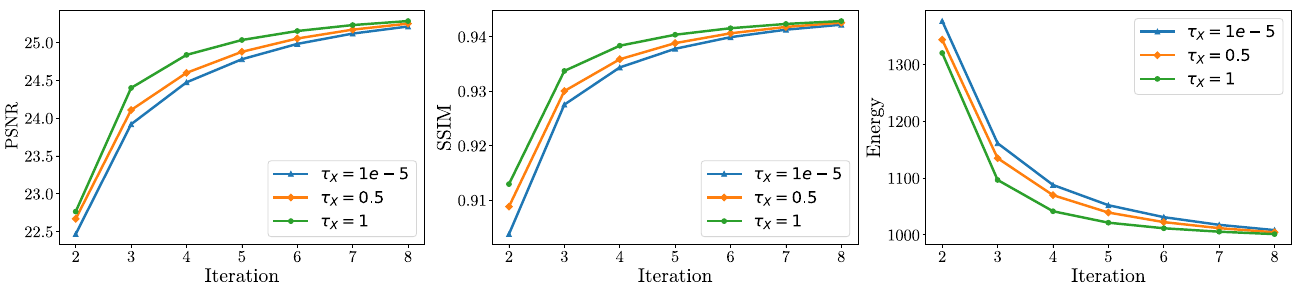}
		}
\caption{PSNR, SSIM, and energy curves of the proposed methods with different $\tau_X$ values from iteration 2 to 8.
 }\label{fig:tau}
 \end{figure}

\begin{figure}[h]
\centerline{\includegraphics[width=6in]{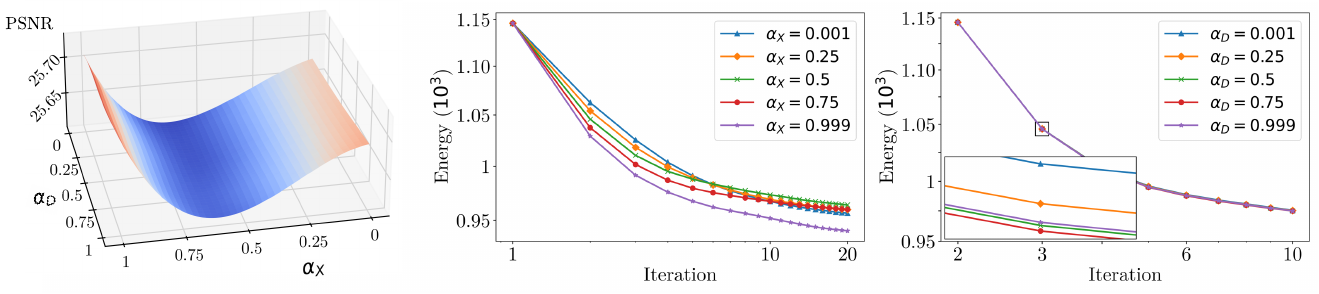}
		}
\caption{Effect of $\alpha_X$ and $\alpha_D$ in iBPLM algorithm on `butterfly' with noise level $25$. 
  The first image is the PSNR surface under different $\alpha_X$ and $\alpha_D$. The second one is the energy curves of different $\alpha_X$ with fixed $\alpha_D=0.5$. The third one is the energy curves of different $\alpha_D$ with fixed $\alpha_X=0.5$. The comparisons of the second and third plots are conducted through log-log scale analysis.
 }\label{fig:alpha}
 \end{figure}
 
Furthermore, some detailed analysis of weight parameter $\tau_X$ and inertial parameters $\alpha_D$ and $\alpha_X$ are provided. 
We plot the PSNR, SSIM, and energy curves of our methods in Figure \ref{fig:tau} to showcase the performance of different weight parameter $\tau_X$ values from iteration 2 to 8. The extrapolated parameters $\alpha_X$ and $\alpha_D$ are analyzed in Figure \ref{fig:alpha}. Specifically, the range of $\alpha_X$ and $\alpha_D$ is set to $[0.001, 0.25, 0.5, 0.75, 0.999]$. The PSNR surface indicates that the final performance is influenced by the extrapolated parameters. Furthermore, we plot the energy curves of inertial parameter $\alpha_X$ (with $\alpha_D = 0.5$ fixed) and $\alpha_D$ (with $\alpha_X = 0.5$ fixed) along the iteration. Specifically, variations in $\alpha_X$ and $\alpha_D$ lead to different energy convergence rates, showcasing the importance of these parameters in refining the model's efficiency and effectiveness. Overall, these results underscore the robustness of our proposed denoising model and the beneficial role of parameter tuning in achieving superior image quality.

\subsection{PnP-iBPLM for Image Restoration}
For this specific inverse problem, given the input image $Y$ with some linear operator $B$, the image restoration model can be formulated as follows:
\begin{equation}\label{model}
    \min_{X,D,I,Z}\frac{\eta}{2}\Vert DX-I\Vert^2+\frac{\beta}{2}\Vert I-Z\Vert^2+\lambda_X\phi_X(X)+\lambda_D\phi_D(D)+\lambda_Z\phi_Z(Z)+\frac{\lambda_I}{2}\Vert BI-Y\Vert^2,
\end{equation}
where $D$ is the dictionary, $X$ is the corresponding sparse coefficient, $I$ is the latent image, and $\eta$, $\beta$, $\lambda_X$, $\lambda_D$, $\lambda_Z$ and $\lambda_I$ are positive parameters. The $\phi_D$, $\phi_X$, and $\phi_Z$ serve as regularisers for $D$, $X$, $Z$, respectively.  
However, we simplify the iteration by applying only one denoiser in $\theta_Z$. For the variables $D$ and $X$ in the dictionary learning method, we use the regularizer according to their definition to better fit the meaning of the model. For $\theta_I$, we use the data-fitting term to constrain the image restoration model. 
Following the proposed PnP-iBPLM algorithm, this model can be solved by setting $h(X,D,I,Z) = \frac{\eta}{2}\Vert DX-I\Vert^2+\frac{\beta}{2}\Vert I-Z\Vert^2$, $\theta_X = \lambda_X\phi_X(X)$, $\theta_D = \lambda_D\phi_D(D)$, $\theta_I = \frac{\lambda_I}{2}\Vert BI-Y\Vert^2$, $\theta_Z = \lambda_Z\phi_Z(Z)$. 
The following are the more specific settings. 

For $X$-subproblem, we have 
\begin{equation}\label{X}
    X^{k+1}\in\underset{X}{\arg\min}~\frac{1}{2\gamma_X}\Big\Vert X-\hat{X}^{k}+ \gamma_X\hat{h}_{X}^{k}\Big\Vert^2+\lambda_X\phi_X(X),
\end{equation}
where $\hat{h}_{X}^k= \eta D^{{k}^{T}}(D^{k}\hat{X}^{k}-I^{k})$, $\hat{X}^{k}=X^{k}+\alpha_X(X^{k}-X^{k-1})$, $D^{{k}^{T}}$ is the conjugate of $D^{k}$, $\alpha_D\in [0,1]$, and $\gamma_X\in(0,1/L_X)$, $L_X$ is the Lipschitz constant. Following the definition of the traditional dictionary learning model, we set $\phi_X(X) = \Vert X\Vert_0$ to describe the sparsity of the coefficients. Hence, the $X$-subproblem can be solved by the hard-shrinkage method. 

For $D$-subproblem, we have 
\begin{equation}\label{D}
    D^{k+1}\in\underset{D}{\arg\min}~\frac{1}{2 \gamma_D}\Big\Vert D-\hat{D}^{k}+\gamma_D\hat{h}_{D}^{k}\Big\Vert^2+\lambda_D\phi_D(D),
\end{equation}
where $\hat{h}_{D}^k = \eta(\hat{D}^{k}\bar{X}^{k}-I^{k})\bar{X}^{k^T}$, $\hat{D}^{k}=D^{k}+\alpha_D(D^{k}-D^{k-1})$, $\bar{X}^k=X^{k}+\tau_X (X^{k+1}- X^{k})$, $\bar{X}^{{k}^{T}}$ is the conjugate of $\bar{X}^k$, $\alpha_D\in [0,1]$, $\tau_X\in [0,1]$, 
and $\gamma_D\in(0,1/L_D)$, $L_D$ is the Lipschitz constant. Following the definition of the traditional dictionary learning model, we set $\phi_D(D) = \Vert D\Vert^2$. Hence, the $D$-subproblem has a close-formed solution. 

For $I$-subproblem, we get
\begin{equation}
    I^{k+1}=\underset{I}{\arg\min}~\frac{1}{2\gamma_I}\Big\Vert I-\hat{I}^{k}+\gamma_I\hat{h}_{I}^{k}\Big\Vert^2+\frac{\lambda_I}{2}\Vert BI-Y\Vert^2,
\end{equation}
with $\hat{h}_{I}^{k}=\eta(\hat{I}^{k}-\bar{D}^{k}\bar{X}^{k})+\gamma(\hat{I}^{k}-Z^{k})$, $\hat{I}^{k}=I^{k}+\alpha_I(I^{k}-I^{k-1})$, $\bar{D}^k=D^{k}+\tau_D (D^{k+1}- D^{k})$, $\bar{X}^k=X^{k}+\tau_X (X^{k+1}- X^{k})$, $\alpha_I\in [0,1]$, $\tau_D\in [0,1]$, $\tau_X\in [0,1]$, and $\gamma_I\in(0,1/L_I)$, $L_I$ is the Lipschitz constant. $I$-subproblem also has a close-formed solution. 

For the $Z$-subproblem 
\begin{equation}
    Z^{k+1}\in\underset{Z}{\arg\min}~\frac{1}{2\gamma_Z}\Big\Vert Z-\hat{Z}^{k}+\gamma_Z\hat{h}_{Z}^{k}\Big\Vert^2+\lambda_Z\phi_Z(Z),
\end{equation}
where $\hat{h}_Z^k = \beta(\hat{Z}^{k}-\bar{I}^k)$, $\hat{Z}^{k} = Z^{k}+\alpha_Z(Z^{k}-Z^{k-1})$, $\bar{I}^k=I^{k}+\tau_I (I^{k+1}- I^{k})$, $\tau_I\in [0,1]$, and $\gamma_Z\in(0,1/L_Z)$, $L_Z$ is the Lipschitz constant. Hence it can be solved by a deep denoiser 
\begin{equation}\label{Z}
    Z^{k+1}=\mathcal{D}_\sigma\Big(\hat{Z}^{k}-\gamma_Z\hat{h}_{Z}^{k}, \sqrt{\gamma_Z\lambda_Z}\Big).
\end{equation}
More specifically, we use the classical DRUNet \cite{zhang2021plug} as our deep gradient step denoiser. DRUNet incorporates both U-Net and ResNet architectures and takes an additional noise level map as input, achieving state-of-the-art performance in Gaussian noise removal. Similar to the setting in \cite{wu2024extrapolated,hurault2022gradient}, we regularize the training loss of $\mathcal{D}_\sigma$ using the spectral norm $\Vert\cdot\Vert_S$ of the Hessian of $g_\sigma$ as follows
\begin{equation}
    \mathcal{L}_S(\sigma)=\mathbb{E}_{\mathbf{x} \sim p, \xi_\sigma \sim \mathcal{N}\left(0, \sigma^2\right)}\left[\left\|\mathcal{D}_\sigma\left(\mathbf{x}+\xi_\sigma\right)-\mathbf{x}\right\|^2+0.01 \max \left(\left\|\nabla^2 g_\sigma\left(\mathbf{x}+\xi_\sigma\right)\right\|_S, 0.9\right)\right]
\end{equation}
to ensure the Lipschitz constant of $\nabla g_\sigma$ is less than $1$, which is consistent with Section \ref{pnpalg}. Berkeley segmentation dataset, Waterloo Exploration Database, DIV2K dataset, and Flick2K dataset are applied as the training sets. 

Note that we follow the setting in the \cite{wu2024extrapolated,hurault2022gradient}, only noise level $\{2.55, 7.65, 12.75\}$ are considered in training. After training, we apply the pre-trained deep gradient step neural network as the denoiser to handle the image restoration problem of heavy Gaussian noise. Experiments show that our method can even handle the image corrupted with heavy motion blur and Gaussian noise.

\begin{table}[t!]
\caption{Average PSNR (dB) and SSIM ($\%$) results of different restoration models for MB$(20, 60)/\sigma=25$. We refer to 'Equivariant' as 'Equi.', and denote our approach with a \textcolor{blue}{\FiveStar}.}
\begin{center}
\resizebox{1\textwidth}{!}
{\begin{tabular}{ccccccccc}
    \hline
     \cellcolor[HTML]{EFEFEF}Datasets&\cellcolor[HTML]{EFEFEF}Metric&\cellcolor[HTML]{EFEFEF}Degraded & \cellcolor[HTML]{EFEFEF}DPIR~\cite{zhang2021plug} & \cellcolor[HTML]{EFEFEF}DiffPIR~\cite{zhu2023denoising} & \cellcolor[HTML]{EFEFEF}Equi.~\cite{terris2023equivariant} &  \cellcolor[HTML]{EFEFEF}SNORE \cite{renaud2024plug} &\cellcolor[HTML]{EFEFEF}DYSdiff \cite{wu2024extrapolated}&\cellcolor[HTML]{EFEFEF}\textcolor{blue}{\FiveStar}PnP-iBPLM\\
    \hline
    \multirow{2}{*}{Set3C}&PSNR&14.75&20.78 & 20.64 & 22.82 &22.14&22.67&\cellcolor[HTML]{D7FFD7}23.05\\
                          &SSIM&50.85& 84.67& 85.41 & 90.68&86.95&89.46&\cellcolor[HTML]{D7FFD7}90.71\\
    \hline
    \multirow{2}{*}{CBSD10}&PSNR&17.58& 24.14& 23.24 & 24.74 &24.01&24.51&\cellcolor[HTML]{D7FFD7}24.77 \\
                          &SSIM&31.08& 74.27& 70.97 &75.70&70.91&74.94&\cellcolor[HTML]{D7FFD7}76.17\\ 
    \hline
    \multirow{2}{*}{Set17}&PSNR&17.79& 24.16& 22.76 & {24.65} & 24.17&24.46&\cellcolor[HTML]{D7FFD7}24.74 \\
                          &SSIM&42.20& 77.75& 73.86& 80.42& 77.08&79.88&\cellcolor[HTML]{D7FFD7}80.85\\  
    \hline
    \multirow{2}{*}{Set18}&PSNR&18.21& 25.56& 22.86 & \cellcolor[HTML]{D7FFD7}26.25 &25.51&26.03&{26.22}\\
                          &SSIM&51.49& 83.76& 79.47& \cellcolor[HTML]{D7FFD7}87.94 &84.15&86.81&{87.50}\\ 
    \hline
    \multirow{2}{*}{Kodak24}&PSNR&18.24& 24.99& 24.23& 	25.25 &24.70&25.16&\cellcolor[HTML]{D7FFD7}25.50 \\
                            &SSIM&36.30& 79.97& 77.51& \cellcolor[HTML]{D7FFD7}81.19 &75.73&79.79&{81.03} \\ 
    \hline
\end{tabular}}
\end{center}
\label{tab:restoration}
\end{table}

\begin{figure}[!b]
\hspace{-0.18in}
 \subfigure[\footnotesize{\small Original}]{
	\zoomincludgraphic{0.223\textwidth}{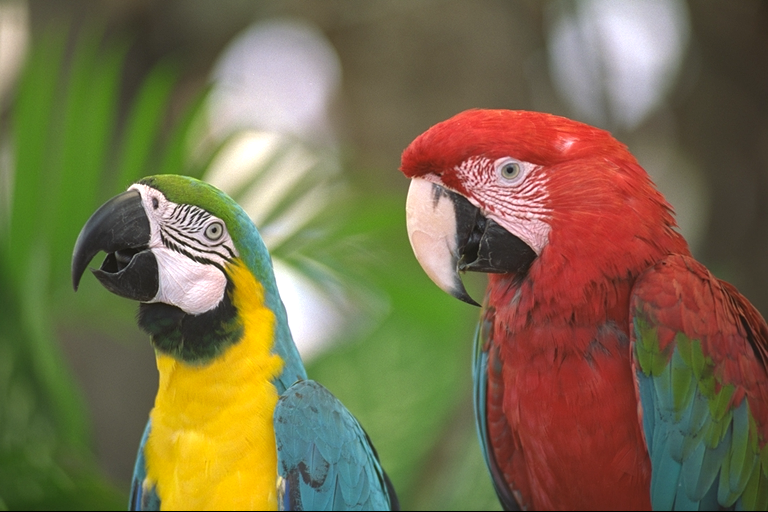}{0.205}{0.615}{0.31}{0.45}{2.7}{help_grid_off}{up_right}{line_connection_off}{2.5}{green}{1.5}{green} 
	} \hspace{-0.255in}
    \subfigure[\footnotesize{\small Observed (19.18/52.78)}]{
	\zoomincludgraphic{0.223\textwidth}{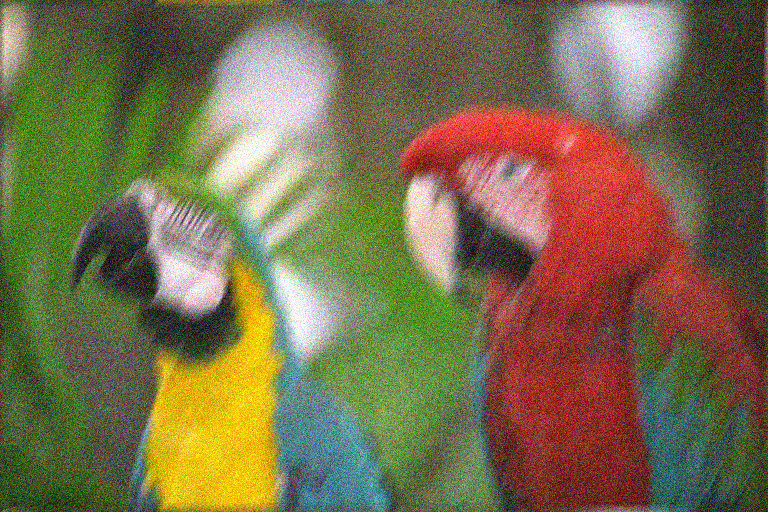}{0.205}{0.615}{0.31}{0.45}{2.7}{help_grid_off}{up_right}{line_connection_off}{2.5}{green}{1.5}{green} 
	}\hspace{-0.22in}
    \subfigure[\footnotesize{\small DPIR (28.13/94.60)}]{
	\zoomincludgraphic{0.223\textwidth}{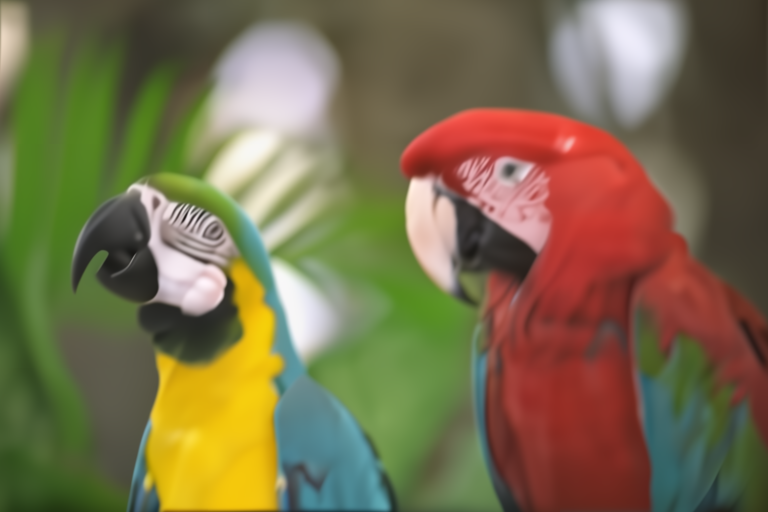}{0.205}{0.615}{0.31}{0.45}{2.7}{help_grid_off}{up_right}{line_connection_off}{2.5}{green}{1.5}{green}
	}\hspace{-0.22in}
    \subfigure[\footnotesize{\small DiffPIR (26.95/93.12)}]{
	\zoomincludgraphic{0.223\textwidth}{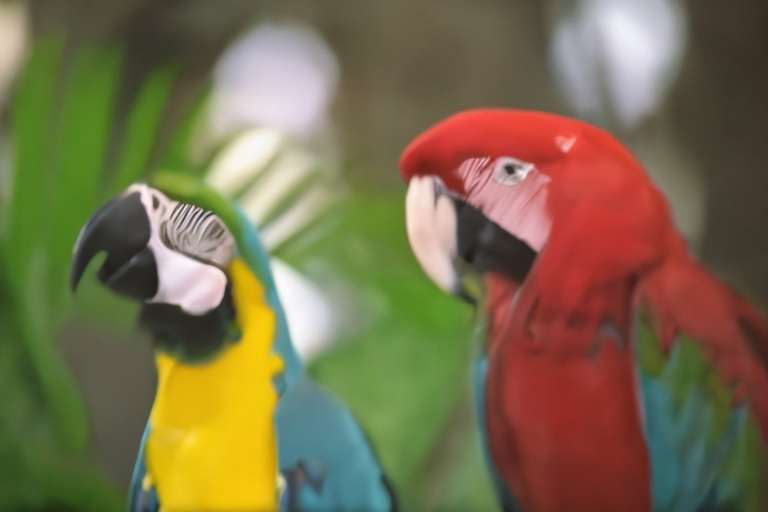}{0.205}{0.615}{0.31}{0.45}{2.7}{help_grid_off}{up_right}{line_connection_off}{2.5}{green}{1.5}{green} 
	}\hspace{-0.22in}

\vspace{-0.1in}
\hspace{-0.18in}
 \subfigure[\footnotesize{\small Equi.  (28.66/95.17)}]{
	\zoomincludgraphic{0.223\textwidth}{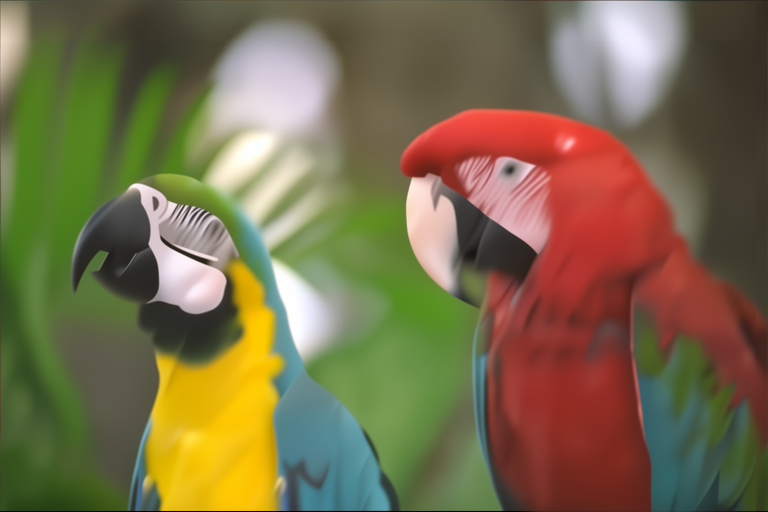}{0.205}{0.615}{0.31}{0.45}{2.7}{help_grid_off}{up_right}{line_connection_off}{2.5}{green}{1.5}{green} 
}\hspace{-0.22in}
 \subfigure[\footnotesize{\small SNORE (27.26/89.88)}]{
	\zoomincludgraphic{0.223\textwidth}{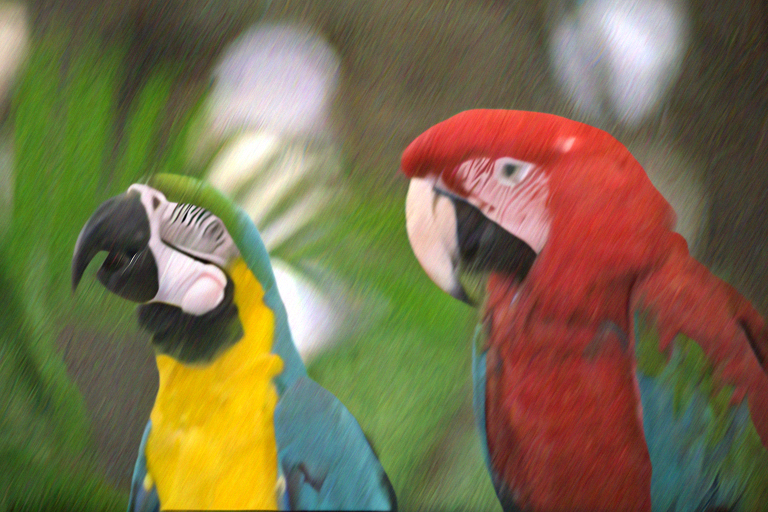}{0.205}{0.615}{0.31}{0.45}{2.7}{help_grid_off}{up_right}{line_connection_off}{2.5}{green}{1.5}{green} 
	}\hspace{-0.22in}
  \subfigure[\footnotesize{\small DYSdiff (28.18/93.78)}]{
	\zoomincludgraphic{0.223\textwidth}{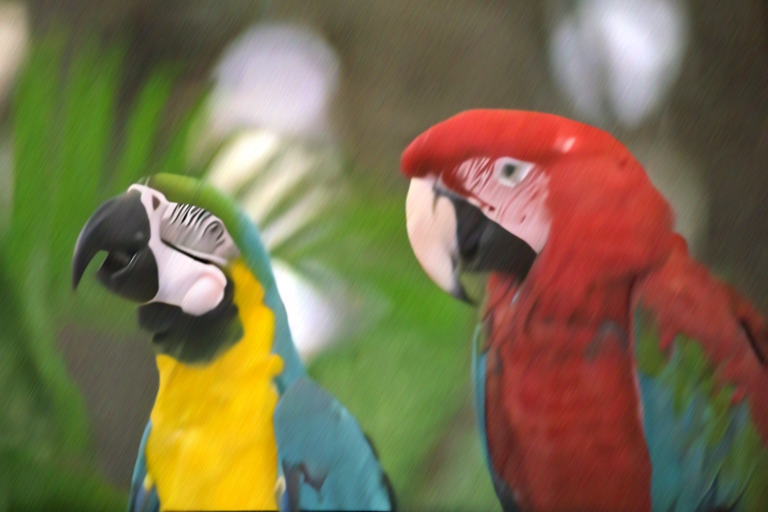}{0.205}{0.615}{0.31}{0.45}{2.7}{help_grid_off}{up_right}{line_connection_off}{2.5}{green}{1.5}{green}  
	}\hspace{-0.22in}
   \subfigure[\footnotesize{\small Ours (29.06/94.75)}]{
	\zoomincludgraphic{0.223\textwidth}{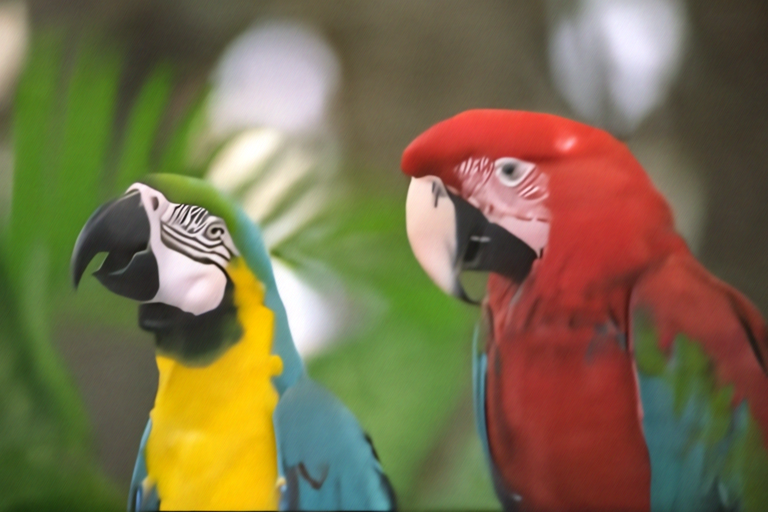}{0.205}{0.615}{0.31}{0.45}{2.7}{help_grid_off}{up_right}{line_connection_off}{2.5}{green}{1.5}{green} 
	}\hspace{-0.22in}
 \caption{Image restoration results (PSNR/SSIM) with motion blur kernel MB$(20, 60)$ and Gaussian noise level $25$. We refer to 'Equivariant' as 'Equi.'. Visualisation comparison of our scheme and some state-of-the-art PnP-based methods: (c) DPIR~\cite{zhang2021plug}, (d) DiffPIR~\cite{zhu2023denoising}, (e) Equivariant~\cite{terris2023equivariant}, (f) SNORE \cite{renaud2024plug}, (g) DYSdiff \cite{wu2024extrapolated}, and (h) Our PnP-iBPLM.
 }\label{fig: parrot}
 \end{figure}

\begin{figure}[h]
	\begin{minipage}{0.245\linewidth}
		\centering	\centerline{\includegraphics[width=1.35in]{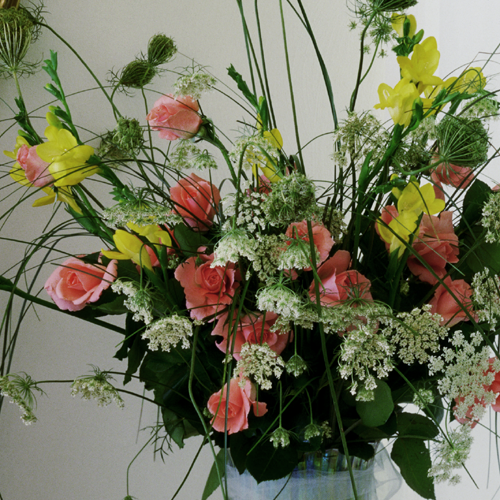}
		}\vspace{-0.03in}
		\centerline{\small (a) Original}
	\end{minipage}  
        \begin{minipage}{0.245\linewidth}
		\centering
	\centerline{\includegraphics[width=1.35in]{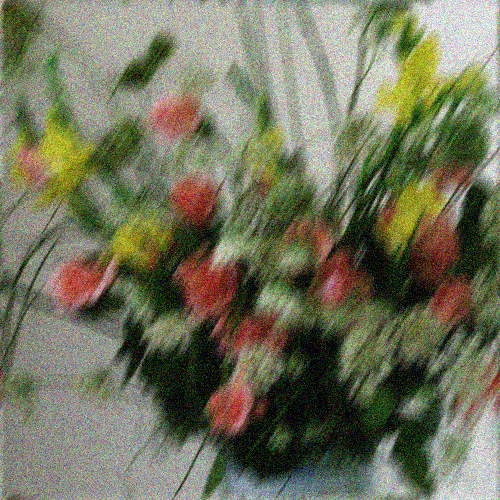}
		}\vspace{-0.03in}
		\centerline{\small (b) Degraded (16.42/30.82)}
	\end{minipage}  
        \begin{minipage}{0.245\linewidth}
		\centering
	\centerline{\includegraphics[width=1.35in]{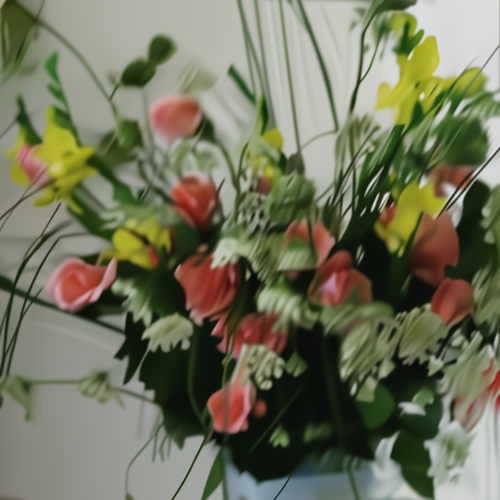}
		}\vspace{-0.03in}
		\centerline{\small (c) DPIR (21.22/71.09)}
	\end{minipage}  
 \begin{minipage}{0.245\linewidth}
		\centering
	\centerline{\includegraphics[width=1.35in]{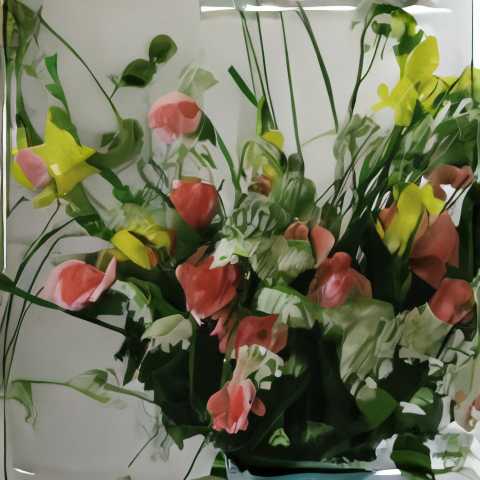}
		}\vspace{-0.03in}
		\centerline{\small (d) DiffPIR (19.31/64.25)}
	\end{minipage}  
  \begin{minipage}{0.245\linewidth}
		\centering
	\centerline{\includegraphics[width=1.35in]{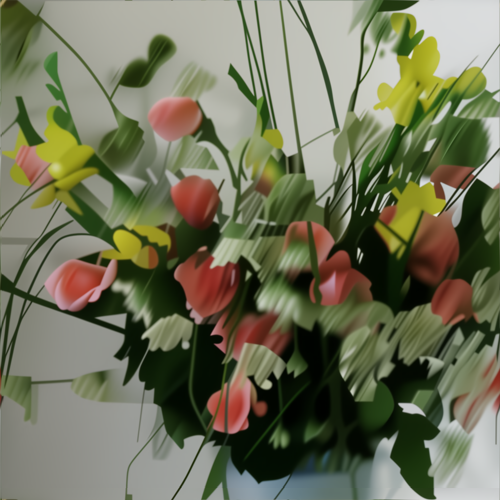}
		}\vspace{-0.03in}
		\centerline{\small (e) Equi. (21.62/75.47)  }
	\end{minipage}
  \begin{minipage}{0.245\linewidth}
		\centering
	\centerline{\includegraphics[width=1.35in]{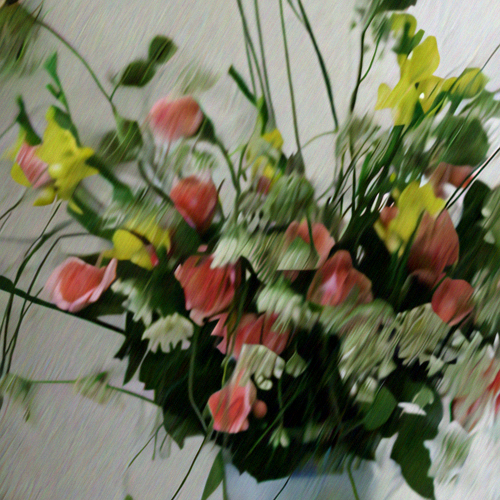}
		}\vspace{-0.03in}
		\centerline{\small (f) SNORE (21.47/70.52) }
	\end{minipage} 
  \begin{minipage}{0.245\linewidth}
		\centering
	\centerline{\includegraphics[width=1.35in]{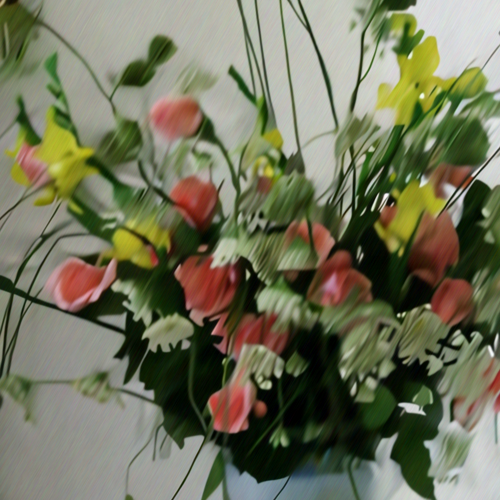}
		}\vspace{-0.03in}
		\centerline{\small (g) DYSdiff (21.68/74.35)}
	\end{minipage} 
 \begin{minipage}{0.245\linewidth}
		\centering
	\centerline{\includegraphics[width=1.35in]{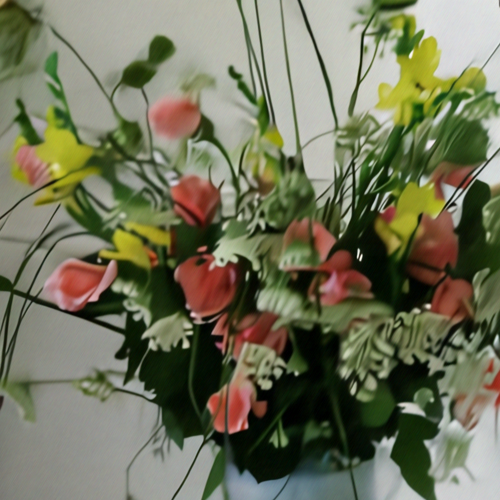}
		}\vspace{-0.03in}
		\centerline{\small (h) Ours (21.92/76.11)}
	\end{minipage}  
\vspace{-0.2cm} \caption{Image restoration results (PSNR/SSIM) with motion blur kernel MB$(20, 60)$ and Gaussian noise level $25$. We refer to 'Equivariant' as 'Equi.'. Visualisation comparison of our scheme and some state-of-the-art PnP-based methods: (c) DPIR~\cite{zhang2021plug}, (d) DiffPIR~\cite{zhu2023denoising}, (e) Equivariant~\cite{terris2023equivariant}, (f) SNORE \cite{renaud2024plug}, (g) DYSdiff \cite{wu2024extrapolated}, and (h) Our PnP-iBPLM.
 }\label{fig:3}
 \end{figure}

\begin{figure}[!b]
	\begin{minipage}{0.245\linewidth}
		\centering	\centerline{\includegraphics[width=1.35in]{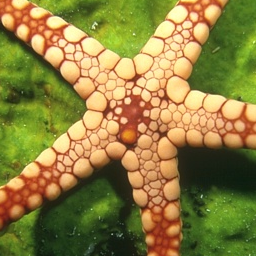}
		}\vspace{-0.03in}
		\centerline{\small (a) Original}
	\end{minipage}  
        \begin{minipage}{0.245\linewidth}
		\centering
	\centerline{\includegraphics[width=1.35in]{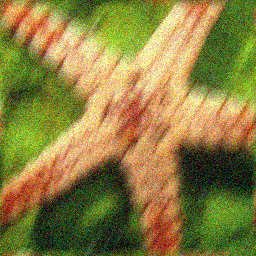}
		}\vspace{-0.03in}
		\centerline{\small (b) Degraded (17.12/68.42)}
	\end{minipage}  
        \begin{minipage}{0.245\linewidth}
		\centering
	\centerline{\includegraphics[width=1.35in]{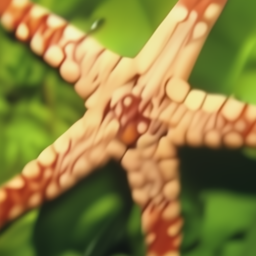}
		}\vspace{-0.03in}
		\centerline{\small (c) DPIR (22.51/92.11) }
	\end{minipage}  
 \begin{minipage}{0.245\linewidth}
		\centering
	\centerline{\includegraphics[width=1.35in]{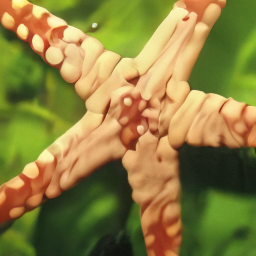}
		}\vspace{-0.03in}
		\centerline{\small (d) DiffPIR (21.80/90.73) }
	\end{minipage}  
  \begin{minipage}{0.245\linewidth}
		\centering
	\centerline{\includegraphics[width=1.35in]{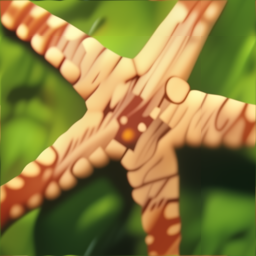}
		}\vspace{-0.03in}
		\centerline{\small (e) Equi. (22.82/92/78)}
	\end{minipage}
  \begin{minipage}{0.245\linewidth}
		\centering
	\centerline{\includegraphics[width=1.35in]{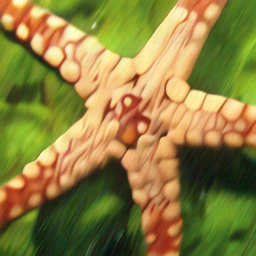}
		}\vspace{-0.03in}
		\centerline{\small (f) SNORE (22.78/91.28)}
	\end{minipage} 
  \begin{minipage}{0.245\linewidth}
		\centering
	\centerline{\includegraphics[width=1.35in]{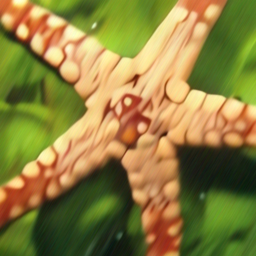}
		}\vspace{-0.03in}
		\centerline{\small (g) DYSdiff (22.90/92.60)}
	\end{minipage} 
 \begin{minipage}{0.245\linewidth}
		\centering
	\centerline{\includegraphics[width=1.35in]{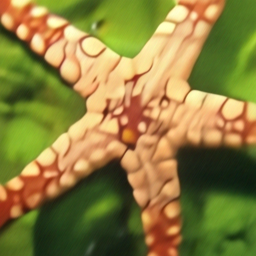}
		}\vspace{-0.03in}
		\centerline{\small (h) Ours (23.12/93.01)}
	\end{minipage}  
\vspace{-0.2cm} \caption{Image restoration results (PSNR/SSIM) with motion blur kernel MB$(20, 60)$ and Gaussian noise level $25$. We refer to 'Equivariant' as 'Equi.'. Visualisation comparison of our scheme and some state-of-the-art PnP-based methods: (c) DPIR~\cite{zhang2021plug}, (d) DiffPIR~\cite{zhu2023denoising}, (e) Equivariant~\cite{terris2023equivariant}, (f) SNORE \cite{renaud2024plug}, (g) DYSdiff \cite{wu2024extrapolated}, and (h) Our PnP-iBPLM.
 }\label{fig:6}
 \end{figure}

We begin by comparing our proposed PnP-iBPLM against the existing techniques of that DPIR~\cite{zhang2021plug}, DiffPIR~\cite{zhu2023denoising}, Equivariant~\cite{terris2023equivariant}, SNORE \cite{renaud2024plug}, DYSdiff \cite{wu2024extrapolated}.
Table \ref{tab:restoration} illustrates the performance of our proposed PnP-iBPLM technique across various datasets at a noise level of $\sigma$, comparing favorably with established restoration models like DPIR, DiffPIR, Equivariant, and SNORE. Notably, PnP-iBPLM consistently achieves top-tier SSIM scores, supporting its exceptional capability in preserving image structure and texture, particularly evident in the CBSD10, Set17, and Kodak24 datasets where it leads with the highest SSIM values. While its PSNR scores are occasionally outperformed by the Equivariant model, PnP-iBPLM demonstrates robust overall effectiveness in PSNR as well, especially highlighted in Set18 and Kodak24. This performance underscores the algorithm’s utility in producing high-quality restorations across different types of images and conditions, confirming its adaptability and strength in handling complex noise levels and various degradation types. 

The visual results depicted in Figures \ref{fig: parrot}, \ref{fig:3}, and \ref{fig:6} further support the quantitative findings discussed earlier, affirming the superior performance of PnP-iBPLM in image restoration tasks. Clear and crisp details are preserved in the denoised images produced by PnP-iBPLM, showcasing its ability to effectively remove noise while retaining essential image features. Comparative analysis against existing techniques such as DPIR, DiffPIR, Equivariant, SNORE, and DYSdiff further solidifies PnP-iBPLM's position as a state-of-the-art solution for high-fidelity image restoration. 

\section{Conclusion}\label{conclusion}
In this paper, we proposed the inertial block proximal linearised minimisation (iBPLM) algorithm, a novel framework for effectively solving non-convex inverse problems in image processing. Addressing the limitations of existing optimisation algorithms, our approach bridges the gap between block-based and alternating-based methods, combining their strengths while mitigating their weaknesses. By integrating extrapolation techniques and deep denoisers within the framework, our PnP-iBPLM algorithm demonstrates robustness and effectiveness in handling non-convex and non-smooth problems, supported by theoretical analysis. Our work contributes significantly to the fields of optimisation and image processing, providing a novel solution for non-convex inverse problems. The superior performance of the iBPLM and PnP-iBPLM algorithms in image denoising and restoration tasks underscores their potential for a wide range of applications.

\bibliographystyle{siamplain}
\bibliography{ref}

\end{document}